\titleformat{\section}{\bfseries}{\thesection .}{0.5em}{}
\titleformat{\subsection}{\itshape}{\thesubsection .}{0.5em}{}
\newtheoremstyle{kai}
{3pt} {3pt} {} {} {\bfseries} {.} {.5em} {}
\def\EquationsBySection{\def\theequation
{\thesection.\arabic{equation}}%
\@addtoreset{equation}{section}}
\newcommand\old[1]{}
\newcommand{\pend}{\hfill \thicklines \framebox(6.6,6.6)[l]{}}
\renewenvironment{proof}{\noindent {\it  Proof.} \rm}{\pend}
\newtheorem{theorem}{Theorem}
\newtheorem{lemma}{Lemma}
\newtheorem{proposition}{Proposition}
\newtheorem{remark}{Remark}
\newtheorem{definition}{Definition}
\newtheorem{example}{Example}
\begin{document}
\pagestyle{plain}
\title
{ Nonstationary nonzero-sum Markov games
\\under a  probability criterion}
\author{Xin Guo$^1$, and Xin Wen$^2$\thanks{The email address: guox87@mail.sysu.edu.cn (X. Guo), wenx33@mail3.sysu.edu.cn (X. Wen). This
work has been supported by the National Key Research and Development Program of China (2022YFA1004600) and  the National
Natural Science Foundation of China (12301170, 72301304, 72342006, 11931018).} \\
\vspace{0.8cm}
\small \it
    {$^1$School of Science, Sun Yat-sen
    	University, Guangzhou 510275, China \\
    \it	$^2$School of Business, Sun Yat-sen
    	University, Guangzhou 510275, China}
}

\date{}
\maketitle

\begin{abstract} This  paper deals with $N$-person nonzero-sum discrete-time Markov games under a probability criterion, in which the transition probabilities and reward functions are allowed to vary with time.
Differing from the existing works on the expected reward criteria, our concern here is to maximize the probabilities that the accumulated rewards until the first passage time to any target set exceed a given goal, which represent the reliability of the players' income. Under a mild condition, by developing a comparison theorem for the  probability criterion, we prove the existence of a Nash equilibrium over history-dependent policies. Moreover, we provide an efficient  algorithm for computing  $\epsilon$-Nash equilibria. Finally, we illustrate our main results by a nonstationary energy management model and take a numerical experiment.

\end{abstract}

\noindent{\it Keywords:}  Nonstationary nonzero-sum stochastic game;  probability criterion;  comparison theorem; optimality equations; Nash equilibrium;  algorithm for $\epsilon$-Nash equilibrium.

\section{Introduction}\label{sec1}

{Stochastic games were first introduced in Shapley's seminal work \cite{Shapley1953},  whose historic context and importance of this work  are well documented in \cite{Solan2015}.
Following Shapley’s foundational contribution \cite{Shapley1953}, zero-sum and nonzero-sum stochastic games
receive particular attention in subsequent research. In zero-sum game theory, Jaśkiewicz and Nowak \cite{Jaskiewicz2017-1} present a nice review on the basic streams of research in this area, in the
context of the existence of value and uniform value, algorithms, vector payoffs,
incomplete information, and imperfect state observation.
For nonzero-sum game, the same authors \cite{Jaskiewicz2017-2} also provide a comprehensive survey encompassing key research directions such as  the existence of
stationary Nash equilibria, correlated equilibria, subgame-perfect equilibria and uniform equilibria, and stopping games.
Furthermore, both papers \cite{Jaskiewicz2017-1,Jaskiewicz2017-2} introduce the expected discount criterion and the limit-average criterion which are common optimality criteria in stochastic games. Below, we will introduce the corresponding developments of three common criteria in zero-sum and non-zero-sum stochastic games:} the (expected) discounted criterion \cite{2Guo2005,Flesch2018,Babu2017,Neyman2017,Pun2016}, the average criterion \cite{Jaskiewicz2002,Babu2017,Simon2016} and the risk-sensitive criterion
 \cite{Bauerle2017,Golui2023,Ghosh2023,Basu2018}.

More precisely,  for the expected discounted criterion, Flesch et al. \cite{Flesch2018} study the  zero-sum discrete-time Markov games (DTMGs) and give the existence conditions of the stationary Nash equilibrium.
Moreover, Babu et al. \cite{Babu2017} consider zero-sum and nonzero-sum DTMGs, and proved the existence of the corresponding Nash equilibrium policies. For continuous-time Markov games (CTMGs), Guo and {Hern\'{a}ndz-Lerma} discuss the expected discounted criterion  \cite{2Guo2005} with a possible unbounded transition rate function for zero-sum games, and the existence of the optimal stationary policy pair is guaranteed.
Neyman \cite{Neyman2017} studies nonzero-sum CTMGs and proves the existence of a Nash equilibrium under the condition that the transition rate and reward function are both bounded. Pun and Wong \cite{Pun2016} study the stochastic nonzero-sum differential insurance game under a general expected total reward criterion, and obtain the existence of a Nash equilibrium.

For the expected average criterion,  Babu et al. \cite{Babu2017} consider the existence of the corresponding Nash equilibrium policies for the zero-sum and nonzero-sum DTMGs. Simon \cite{Simon2016} discusses the existence of approximate Nash equilibrium for nonzero-sum DTMGs.   Ja\'{s}kiewicz \cite{Jaskiewicz2002} studies two-person zero-sum semi-Markov games (SMGs) under the average criterion, where it is assumed that the transition probability satisfies some generalized geometric ergodic conditions, and proves that the solution of Shapley equation can be approximated by the solution of some Shapley equations for $\epsilon$-perturbed SMGs.

Moreover, for risk-sensitive criterion,  B\"{a}uerle and Rieder \cite{Bauerle2017} consider the   zero-sum DTMGs and prove the
existence of optimal policies.
Very recently,
Golui and Pal \cite{Golui2023} study the  zero-sum CTMGs on a general state space with risk-sensitive discounted cost criteria. { Ghosh} et al. \cite{Ghosh2023} consider the  zero-sum DTMGs for risk-sensitive average cost criterion   with countable/compact state space and Borel action spaces. Guo et al. \cite{GuoX2023}  discuss the expected discounted zero-sum risk-sensitive stochastic games with unbounded payoff functions and varying discount factors. 
 For nonzero-sum stochastic games, Basu and {Ghosh} \cite{Basu2018} study  the infinite horizon risk-sensitive discounted cost and ergodic cost  for controlled Markov chains with countably many states. 
 
 
As we can see, these studies \cite{Basu2018,Pun2016,Simon2016,Neyman2017,Ghosh2023,Golui2023,Bauerle2017,Babu2017,Flesch2018,2Guo2005,Bayraktar2016,De2018,Jaskiewicz2002} focus on the (expected) discounted, the average and the risk-sensitive criteria.
However, probability, as an effective and intuitive way to measure risk, is often chosen as an important risk metric.
Formally, the security or safety of player's rewards can be defined as the probability that the player's accumulated payoff reaches to some given profit goal before the state drops into a target set. 
 Recently, Bhabak and Saha \cite{Bhabak2021} consider CTMGs for the  probability criterion,  and show the existence of the value of the game and a randomized stationary saddle point.
In \cite{Huang2023}, a two-person zero-sum SMGs  is discussed, and the existence condition of the optimal policy pair is proved and an approximate algorithm is proposed.
 For the nonzero-sum stochastic game with probability criterion, as far as we know, only  \cite{Huang2020} discusses and proves the existence of Nash equilibrium.
As a special case of the game problem, Markov decision processes (MDPs in short) also optimize the players' decisions, but there is only one decision maker. 
Some achievements have been made in the research of MDPs with probability criteria; see \cite{Wen} for discrete-time case, \cite{21} for semi-Markov case and \cite{11} for continuous-time case.

All of these papers \cite{Basu2018,Pun2016,Simon2016,Neyman2017,Ghosh2023,Golui2023,Bauerle2017,Babu2017,Flesch2018,2Guo2005,Bayraktar2016,De2018,Jaskiewicz2002,Huang2023,11,Wen,21,Bhabak2021}
mentioned above assumed that the payoff functions and the transition laws is time-homogeneous.
However, in many fields like
actual economies, experience population growth and technological progress, the payoff functions and the transition probabilities usually change with time.
Thus, it is natural and desirable  to discuss the  nonstationary stochastic games. In fact,  the nonstationary zero-sum games have been studied in \cite{Nowak4, Leao} {\color{black}and the nonstationary nonzero-sum games have been studied in \cite{Maitra, Mertens}}. Precisely, Nowak \cite{Nowak4} considers the expected total payoff criterion  of the nonstationary zero-sum games and  proves the existence of the game's  value and optimal policies.
Leao et al. \cite{Leao} consider that
the players do not have all information about the states of the
game based on the model in \cite{Nowak4},
and establish the value of the game and optimal policies.  {\color{black}Maitra and Sudderth consider $n$-person nonzero-sum games with the time-dependent payoff function in \cite{Maitra}.}

To the best of our knowledge,  the nonzero-sum   nonstationary stochastic games {\color{black}with the probability criterion}  have not been studied.  In addition, the above literature on nonzero-sum stochastic games under probability criterion mainly discusses the existence conditions of a Nash equilibrium, and there is {\color{black}no} literature on the effective algorithm, which motivates the study in this paper.

Aforementioned problems intrigue our interests in the more general case: the nonstationary $N$-person nonzero-sum games with probability criterion. We make efforts in this paper to maximize the probabilities of the accumulated rewards until the first passage time to any target set exceeding a given goal, which represents the reliability of the players' income. Different from the case of stationary DTMGs under the probability criterion with only one optimality equation, we derive a sequence of optimality equations for the nonstationary  model, and give a new condition (i.e., Proposition 1 below), which is the generalization of those in \cite{Huang2020}. Under the new condition, we present a comparison theorem for the probability criterion, and establish the existence of a solution to the optimality equations. Moreover, using the comparison theorem and optimality equations developed here, we also prove the {\color{black}existence} of a Nash equilibrium. Moreover, we provide an efficient
algorithm to find $\epsilon$-Nash equilibria, and give the convergence and error bound analyses of the algorithm.
Furthermore, we use an   electricity system to demonstrate our main results in which our new condition is satisfied but the previous ones in \cite{Huang2020} for stationary cases fail to hold.

The remainder of this paper structures as follows. Section 2 builds the model of nonstationary $N$-person nonzero-sum games with probability criterion. In Section 3, we give a new and {\color{black}mild} condition, under which the comparison theorem is established. Section 4 is the main proof of the existence of the Nash equilibrium, which is based on our comparison theorem. An efficient algorithm is developed in Section 5. Finally we illustrate the results with an electricity system example in Section 6.

\section{The game  model}\label{sec2}
Let $I=\{1,2,\ldots,N\}$, $\mathbb{N}_0:=\{0,1,\cdots\}$. The model of a nonstationary $N$-person nonzero-sum stochastic dynamic game is of the following form
\begin{eqnarray}\label{model}
\{S_n,D, A_n^k,A_n^k(i),p_n(j|i,\bm{a}),r_n^k(i,\bm{a})\}_{n\in \mathbb{N}_0},
\end{eqnarray}
where
\\$\bullet$ $S_n$:  denotes the state space at time $n$, assumed to be  denumerable;
\\$\bullet$ $D$:  is a given target set, which is assumed to be nonempty subset of $\cap_{n=0}^{\infty}S_n$;
\\$\bullet$ $A_n^k$:  denotes the action space for player $k$ at time $n$;
\\$\bullet$ $A_n^k(i)$: is a finite set of  actions for player {\color{black}$k$},  when the system is in state $i\in S_n$ at time $n$;
\\$\bullet$ $p_n(j|i,\bm{a})$: represents the transition probability of states from $i\in S_n$ to $j\in S_{n+1}$ under action $\bm{a}\in \bm{A}_n(i)$,where $\bm{A}_n(i):=\times_{k=1}^{N}A_n^k(i)$;
\\$\bullet$ $r_n^k(i,\bm{a})$: a nonnegative rational-valued  function on $\mathbb{K}_n$, {\color{black}denoting} the reward at stage $n$ for player $k$,
where $\mathbb{K}_n:=\{(i,\bm{a})|i\in S_n, \text{a}\in \bm{A}_n(i)\}$ ($n\in\mathbb{N}_0$) is   a subset of $S_n\times \bm{A}_n$ with  $\bm{A}_n:=\times_{k=1}^{N}A_n^k$.

We now describe the evolution of a nonstationary $N$-person nonzero-sum game with the probability criterion. Suppose that a system occupies state $i_0\in S_0$ at initial decision time 0. Given any  (rational-valued)  profit  goal $\bm{\lambda}_0:=(\lambda_0^1,\cdots,\lambda_0^N)$ for players, that is, each player $k$ should try the best to obtain rewards more than $\lambda_0^k$  before the system state falls into the  target set $D$.  Based on the system state $i_0$ and the goal $\bm{\lambda}_0$, the players  independently choose actions $\bm{a}_0:=(a_0^1,\cdots,a_0^N)$ from $\bm{A}_0(i_0)$.
As the consequences of the action choices, the following events occur: (i) at time 1 the system state changes to $i_1\in S_1$ according to the transition probability $p_0(i_1\mid i_0,\bm{a}_0)$; (ii) player $k$ receives a reward $r_0^k(i_0,\bm{a}_0)$, and then the remaining goal   is $\lambda^k_1:=\lambda^k_0-r_0^k(i_0,\bm{a}_0)$ for each $k\in I$. According to the current state $i_1$ and the current goal $\bm{\lambda}_1:=(\lambda_1^1,\ldots,\lambda_1^N)$ as well as the previous state $i_0$ and goal $\bm{\lambda}_0$, the  action $\bm{a}_1\in \bm{A}_1(i_1)$ is chosen, and the same sequence of events occurs. The process evolves in this way. Up to  time $n$, we  obtain an admissible history $(i_0,\bm{\lambda}_0,\bm{a}_0,\ldots,i_{n-1},\bm{\lambda}_{n-1},\bm{a}_{n-1} ,i_n,\bm{\lambda}_n)$,
where $\bm{\lambda}_m:=(\lambda_m^1,\ldots,\lambda_m^N)$ , $\lambda^k_{m+1}:=\lambda^k_m-r^k_m(i_m,\bm{a}_m)$, $(i_m,\bm{a}_m)\in\mathbb{K}_m$, $m=0,1,\ldots,n-1$ and $i_n\in S_n$, $n\in\mathbb{N}_0$.

For the following arguments, let $\mathbb{Q}$  be the set of all rational numbers, $\mathbb{H}_0:={\color{black}S_0\times \mathbb{Q}^N}$,
$\mathbb{H}_n:=\{(h_{n-1},\bm{a}_{n-1},i_n,\bm{\lambda}_n),h_{n-1}\in \mathbb{H}_{n-1},\bm{a}_{n-1}\in \bm{A}_{n-1}(i_{n-1}),  i_n\in S_n,\bm{\lambda}_n\in\mathbb{Q}^N\}$, $n\ge 1$, which are endowed with the Borel $\sigma$-algebra.

To precisely define the optimality criterion, we need to introduce the concepts of policies.
\begin{definition}
A randomized history-dependent policy of player $k$ is a sequence $\pi^k= \{\pi_n^k,n\ge0\}$ of stochastic kernels $\pi_n^k$ on $A_n^k$ given $\mathbb{H}_n$, which satisfies the constraint $\pi_n^k(A_n^k(i_n)\mid h_n)=1$ for every $h_n=(i_0,\bm{\lambda}_0,\bm{a}_0,\ldots,i_{n-1},\bm{\lambda}_{n-1},\bm{a}_{n-1},i_n,\bm{\lambda}_n)\in \mathbb{H}_n$ and $n\in\mathbb{N}_0$. The set of all randomized history-dependent  policies for player $k$ is denoted by $\Pi^k$.

Let $\Phi_n^k$ ($n\in\mathbb{N}_0,k\in I$) denote the set of all stochastic kernels $\phi_n^k$ on $A_n^k$ given $S_n\times \mathbb{Q}^N$ such that $\phi_n^k(A_n^k(i)\mid i,\bm{\lambda})=1$ for all $ (i,\bm{\lambda})\in S_n\times \mathbb{Q}^N$.

A policy $\pi^k= \{\pi_n^k,n\in \mathbb{N}_0\}\in\Pi^k$ is said to be randomized Markov for player $k(k\in I)$ if there exists a sequence $\{\phi_n^k,n\ge0\}$ of stochastic kernels $\phi_n^k\in\Phi_n^k$ satisfying $\pi_n^k(\cdot\mid h_n)=\phi_n^k(\cdot\mid i_n,\bm{\lambda}_n)$ for every $h_n=(i_0,\bm{\lambda}_0,\bm{a}_0,\ldots,i_{n-1},\bm{\lambda}_{n-1},\bm{a}_{n-1} ,i_n,\bm{\lambda}_n)\in \mathbb{H}_n$ and $n\in\mathbb{N}_0$.
We write such a policy as $\pi^k=\{\phi_n^k,n\ge0\}$.
The set of all randomized Markov policies for player $k$ is denoted by $\Pi^k_m$.
Then $\bm{\Pi}:=\times_{k=1}^N\Pi^k$ and $\bm{\Pi}_m:=\times_{k=1}^N\Pi^k_m$ are the corresponding multipolicies.
\end{definition}

Given any  $\bm{\pi}=(\pi^1,\pi^2,\ldots,\pi^N)\in\bm{\Pi}$, $k\in I$, and policy $\pi'\in\Pi^k$, we take $\bm{\pi}^{-k}:=(\pi^l,l\in I,l\ne k)$ as the $N-1$ dimensional multipolicy and $[\bm{\pi}^{-k},\pi']$ denote the multipolicy that player $l$ uses $\pi^l$ for each $l\ne k$ and player $k$ uses $\pi'$. Similarly, $\bm{\Pi}^{-k}$ and $\bm{\Pi}_m^{-k}$ are the corresponding multipolicies.
Furthermore, for any fixed $n\in \mathbb{N}_0$, we denote
$\bm{\pi}_n:=(\pi^1_n,\pi^2_n,\ldots,\pi^N_n)$ and  take $\bm{\pi}_n^{-k}:=(\pi_n^l,l\in I,l\ne k)$ as the $N-1$ dimensional policy.

For each initial distribution $\gamma$ on $S_0\times \mathbb{Q}$ and $\bm{\pi}\in\bm{\Pi}$, by the well-known {\color{black}Ionescu-Tulcea theorem} \cite{Hernandez1996}, there exists a unique probability space $(\Omega,\mathbb{P}^{\bm{\pi}}_{\gamma})$ and a stochastic process $\{X_n,\Lambda_n^1, $\ldots,$\Lambda_n^N,Y_n^1, $\ldots,$ Y_n^N,n\in \mathbb{N}_0\}$, where $X_n$, $\Lambda_n^k$, $Y_n^k$ represent the random variables of the state, the profit goal, and the action adopted by player $k$ at time $n$, respectively. Let $$\bm{\Lambda}_n=(\Lambda_n^1,\ldots,\Lambda_n^N), Y_n=(Y_n^1,\ldots,Y_n^N),
  H_n=(X_0,\bm{\Lambda}_0,Y_0,
  \ldots,X_{n-1},\bm{\bm{\Lambda}}_{n-1},Y_{n-1},\ldots,X_n,\bm{\Lambda}_n).$$ Then, for any subset $C\subseteq\mathbb{Q}$,  $h_n=(i_0,\bm{\lambda}_0,\bm{a}_0,\ldots,i_{n-1},\bm{\lambda}_{n-1},\bm{a}_{n-1} , i_n,\bm{\lambda}_n)\in \mathbb{H}_n$,  $i_{n+1} \in S_{n+1}$, $\bm{a}_n=(a_n^1,\ldots,a_n^N)\in \bm{A}_n(i_n)$ and $n\ge 0$, we have
\begin{eqnarray}
&&\mathbb{P}_{\gamma}^{\bm{\pi}}(Y_n=\bm{a}_n\mid H_n=h_n)
=\pi_n^1(a_n^1\mid h_n)\pi_n^2(a^2_n\mid h_n)\ldots \pi_n^N(a^N_n\mid h_n),\label{measure}\\
&&\mathbb{P}_{\gamma}^{\bm{\pi}}(\Lambda_{n+1}^k\in C\mid H_n=h_n,Y_n=\bm{a}_n)=\mathbb{I}_C(\lambda^k_{n}-r_{n}^k(i_{n},\bm{a}_{n}))~\forall~k\in I,\label{lambda}\\
&&\mathbb{P}_{\gamma}^{\bm{\pi}}(X_{n+1}=i_{n+1}\mid H_n=h_n,Y_n=\bm{a}_n)
=p_n(i_{n+1}\mid i_n,\bm{a}_{n}),\label{P}
          \end{eqnarray}
where the $\mathbb{I}_C$ denotes the indicator function of a set $C$.
The expectation operator with respect to $\mathbb{P}^{\bm{\pi}}_{\gamma}$ is denoted by $\mathbb{E}^{\bm{\pi}}_{\gamma}$.

In order to introduce the     probability criterion, let 
\begin{eqnarray*}
\tau_D:=\inf\{n\ge 0,X_n\in D\}~~(with~ \inf \emptyset:=\infty),
\end{eqnarray*}
which represents the first passage time into the target set $D$ of  the state process.

For any  $i\in S_0$, $\bm{\lambda}=(\lambda^1,\ldots,\lambda^N)\in \mathbb{Q}^N$ and $\bm{\pi}\in\bm{\Pi}$, the  probability criterion for player $k~(k\in I)$ is defined by
\begin{eqnarray}\label{tau}
F^k(i,\bm{\lambda},\bm{\pi}):=\mathbb{P}_{(i,\bm{\lambda})}^{\bm{\pi}}
(\sum_{n=0}^{\tau_D-1}r_n^k(X_n,Y_n)\geq \lambda^k).
\end{eqnarray}
where $\sum_{n=l}^{m}y_n:=0$   for any sequence $\{y_n\}$ when $m<l$.

$F^k(i,\bm{\lambda},\bm{\pi})$ measures the security  for player $k$ that the total rewards obtained is more than the profit goal $\lambda^k$ under the policy $\bm{\pi}\in\bm{\Pi}$. Moreover,
for all $k\in I$, $(i,\bm{\lambda})\in D\times \mathbb{Q}^N$ and $\bm{\pi}\in\bm{\Pi}$, since $\tau_D=0$, so $\sum_{n=0}^{\tau_D-1}r_n^k(X_n,Y_n)=0$. Thus, we have $F^k(i,\bm{\lambda},\bm{\pi})=\mathbb{P}_{(i,\bm{\lambda})}^{\bm{\pi}}
(\sum_{n=0}^{\tau_D-1}r_n^k(X_n,Y_n)\geq \lambda^k)=\mathbb{P}_{(i,\bm{\lambda})}^{\bm{\pi}}
(0\geq \lambda^k)=
\mathbb{I}_{(-\infty,0]}(\lambda^k)$.

   Define the optimal $\bm{\pi}$-response function for player $k$ by
\begin{eqnarray}\label{n-value}
F^{k}_*(i,\bm{\lambda},\bm{\pi}^{-k}):=\sup_{\pi^k\in\Pi^k}
F^k(i,\bm{\lambda},\bm{\pi})~\forall~(i,\bm{\lambda})\in S_0\times \mathbb{Q}^N.
\end{eqnarray}
\begin{definition}
A policy $\bm{\pi}\in \bm{\Pi}$ is called a Nash equilibrium for the nonzero-sum game if
\begin{eqnarray}
F^k(i,\bm{\lambda},\bm{\pi})=F^{k}_*(i,\bm{\lambda},\bm{\pi}^{-k})~~\forall ~(i,\bm{\lambda})\in S_0\times \mathbb{Q}^N, k\in I.
\end{eqnarray}
\end{definition}

Our goal here is to give conditions for the existence and computation of  Nash equilibria.

\section{Preliminaries}\label{sec3}

This section gives some facts, which are required for the arguments of our main results about the existences of  Nash equilibria.

To achieve our goal, besides the first passage time $\tau_D$, we introduce another first passage time $\tau_D^n$ into the target set $D$ after time $n$ for any $n\geq 0$:
\begin{eqnarray*}
	\tau_D^n:=\inf\{k\ge n,X_k\in D\}~\forall~ n\ge 0.
\end{eqnarray*}

Using  $\tau_D^n$ and the Markov property of  $\{X_n,\bm{\Lambda}_n,Y_n,n\geq 0\}$ under any Markov policy,  we can reasonably introduce the following notation: for  $n\ge 1$, $i\in S_n$, $\bm{\lambda}=(\lambda^1,\ldots,\lambda^N)\in \mathbb{Q}^N$ and $\bm{\pi}\in \bm{\Pi}_m$,
\begin{eqnarray*}
F_n^{k}(i,\bm{\lambda},\bm{\pi}):=\mathbb{P}_{\gamma}^{\bm{\pi}}
(\sum_{t=n}^{\tau_D^n-1}r_t^k(X_t,Y_t)\geq \lambda^k\mid X_n=i,\bm{\Lambda}_n=\bm{\lambda}),
\end{eqnarray*}
 which represents the risk probability of player $k$ from any time $n$ to $\tau_D^n$ under the policy $\bm{\pi}\in \bm{\Pi}_m$ when the state and goal at time $n$ are $i$ and $\bm{\lambda}$, respectively. Moreover, due to  $\tau_D=\tau_D^0$, for consistency, we denote $F_0^k(i,\bm{\lambda},\bm{\pi}):=F^k(i,\bm{\lambda},\bm{\pi})$ for all ~$(i,\bm{\lambda})\in S_0\times \mathbb{Q}^N, k\in I$ and $\bm{\pi}\in\bm{\Pi}$.

To further establish the comparison theorem for the probability criterion and prove the uniqueness of the solution to the optimality equations, we impose {\color{black}the} assumption below.

{\bf Assumption A.}
For each $\bm{\pi}\in \bm{\Pi}$ and $n\ge 0$, $\mathbb{P}_{\gamma}^{\bm{\pi}}
(\tau_D^n<\infty\mid H_n=h_n)=1$ for $h_n=(i_0,\bm{\lambda}_0,\bm{a}_0,\ldots ,i_n,\bm{\lambda}_n)\in\mathbb{H}_n$ with $i_n\in S_n\setminus D$ and $\mathbb{P}_{\gamma}^{\bm{\pi}}(H_n=h_n)>0$, where  $S_n\setminus D$ represents the complement of $D$ with respect to $S_n$.

Assumption A implies that starting at any fixed time $n\ge 0$, the controlled system will eventually arrive at $D$ within finite time under any policy $\bm{\pi}\in \bm{\Pi}$ when the state at time $n$ belongs to $S_n\setminus D$.

To verify Assumption A, we provide a  sufficient condition, which is imposed on the primitive {data} of model (\ref{model}) and is easy to be verified.
\begin{proposition}\label{prop1}
For each $n\ge 0$,  if there exists a  constant $\beta_n\in[0,1)$ such that $p_n(D|i,\bm{a})\ge \beta_n$ for all $i\in S_n\setminus D$, $\bm{a}\in \bm{A}_n(i)$ and  $\sum_{n=0}^{\infty}\beta_n=+\infty$, then Assumption A holds.
\end{proposition}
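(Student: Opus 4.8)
The plan is to estimate, under an arbitrary multipolicy $\bm{\pi}\in\bm{\Pi}$, the conditional probability that the state avoids $D$ over a long stretch of time after $n$, and to show that this survival probability decays to zero. Fix $n\ge 0$ and an admissible history $h_n=(i_0,\bm{\lambda}_0,\bm{a}_0,\ldots,i_n,\bm{\lambda}_n)$ with $i_n\in S_n\setminus D$ and $\mathbb{P}_\gamma^{\bm{\pi}}(H_n=h_n)>0$. The starting observation is that $\{\tau_D^n>n+j\}=\{X_t\notin D,\ n\le t\le n+j\}$, and since $X_n=i_n\notin D$ is already fixed by $h_n$, this event concerns only $X_{n+1},\ldots,X_{n+j}$.

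The one-step key estimate I would establish is that on $\{X_t\notin D\}$, irrespective of the past, the hypothesis $p_t(D\mid i,\bm{a})\ge\beta_t$ for all $i\in S_t\setminus D$ and $\bm{a}\in\bm{A}_t(i)$ yields
\[\mathbb{P}_\gamma^{\bm{\pi}}(X_{t+1}\in D\mid H_t)=\sum_{\bm{a}\in\bm{A}_t(X_t)}\Big(\prod_{k=1}^N\pi_t^k(a^k\mid H_t)\Big)p_t(D\mid X_t,\bm{a})\ge\beta_t,\]
where I use \eqref{measure} and \eqref{P} together with $\sum_{\bm{a}}\prod_k\pi_t^k(a^k\mid H_t)=1$. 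Equivalently, $\mathbb{P}_\gamma^{\bm{\pi}}(X_{t+1}\notin D\mid H_t)\le 1-\beta_t$ almost surely on $\{X_t\notin D\}$; note this bound is uniform over all actions, so it holds for history-dependent $\bm{\pi}$ after averaging out $Y_t$.

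Next I would iterate this bound via the tower property. Conditioning the indicator of $\{X_{n+1}\notin D,\ldots,X_{n+j}\notin D\}$ on $H_{n+j-1}$ and using that the indicator forces $X_{n+j-1}\notin D$, the one-step estimate peels off a factor $(1-\beta_{n+j-1})$; repeating down to $t=n$ (where $X_n=i_n\notin D$) gives by induction
\[\mathbb{P}_\gamma^{\bm{\pi}}(\tau_D^n>n+j\mid H_n=h_n)\le\prod_{t=n}^{n+j-1}(1-\beta_t).\]
Letting $j\to\infty$ and invoking $1-x\le e^{-x}$,
\[\mathbb{P}_\gamma^{\bm{\pi}}(\tau_D^n=\infty\mid H_n=h_n)\le\prod_{t=n}^{\infty}(1-\beta_t)\le\exp\Big(-\sum_{t=n}^{\infty}\beta_t\Big)=0,\]
because deleting the finitely many nonnegative terms $\beta_0,\ldots,\beta_{n-1}$ from the divergent series $\sum_{t=0}^\infty\beta_t=+\infty$ leaves a divergent tail. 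Hence $\mathbb{P}_\gamma^{\bm{\pi}}(\tau_D^n<\infty\mid H_n=h_n)=1$, which is precisely Assumption A.

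As for difficulties, the argument is in essence a uniform geometric-trials (Borel--Cantelli type) estimate and presents no deep obstacle. The only points demanding care are the bookkeeping in the tower-property induction, namely ensuring the one-step bound is applied on the correct event $\{X_t\notin D\}$ and that \eqref{measure}--\eqref{P} are invoked to identify the conditional law under a possibly history-dependent $\bm{\pi}$, and the elementary but essential implication $\sum_t\beta_t=\infty\Rightarrow\prod_t(1-\beta_t)=0$, for which the inequality $1-x\le e^{-x}$ suffices.
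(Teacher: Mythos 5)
Your proof is correct and follows essentially the same route as the paper's: a one-step bound $\mathbb{P}(X_{t+1}\notin D\mid H_t)\le 1-\beta_t$ obtained by averaging the hypothesis over the policy kernels, iterated to get $\mathbb{P}_\gamma^{\bm{\pi}}(\tau_D^n>n+j\mid H_n=h_n)\le\prod_{t=n}^{n+j-1}(1-\beta_t)$, and then the divergence of $\sum_n\beta_n$ forcing the infinite product to vanish. The only differences are cosmetic: the paper's induction peels the factor at time $n$ by conditioning on $H_{n+1}$ whereas you peel the last factor via the tower property, and you spell out the implication $\sum_t\beta_t=\infty\Rightarrow\prod_t(1-\beta_t)=0$ via $1-x\le e^{-x}$, which the paper asserts without detail.
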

\begin{proof} For any given $\bm{\pi}\in \bm{\Pi}$ and $n\ge 0$, we now  prove by induction that
\begin{eqnarray}\label{induction1}
\mathbb{P}_{\gamma}^{\bm{\pi}}
(\tau_D^n>n+k\mid H_n=h_n)&=&\mathbb{P}_{\gamma}^{\bm{\pi}}
(\bigcap_{m=n+1}^{k+n}\{X_m\in S_m\setminus D\}\mid H_n=h_n)\nonumber\\
&\leq&(1-\beta_n)(1-\beta_{n+1})\cdots(1-\beta_{n+k-1})~~\forall~k\ge 1,
\end{eqnarray}
when $\mathbb{P}_{\gamma}^{\bm{\pi}}(H_n=h_n)>0$ for $h_n=(i_0,\bm{\lambda}_0,\bm{a}_0,\cdots ,i_n,\bm{\lambda}_n)\in \mathbb{H}_n$, $i_n\in S_n\setminus D$.
\\For any $n\ge 0$ and $k=1$,  since
$p_n(D|i,a) \ge \beta_n$, we have
\begin{eqnarray*}
&&\mathbb{P}_{\gamma}^{\bm{\pi}}
(\tau_D^n>n+1\mid H_n=h_n)\\
&=&\mathbb{P}_{\gamma}^{\bm{\pi}}
(X_{n+1}\in S_{n+1}\setminus D\mid H_n=h_n)\\
&=&
\mathbb{E}_{\gamma}^{\bm{\pi}}
\left[\mathbb{I}_{\{X_{n+1}\in S_{n+1}\setminus D\}}\mid H_n=h_n\right]\\
&=&\sum_{a_n^1\in A_n^1(i_n)}\cdots\sum_{a_n^N\in A^N_n(i_n)}\sum_{i_{n+1}\in S_{n+1}\setminus D}p_n(i_{n+1}\mid i_n,\bm{a})\pi_n^1(a_n^1\mid h_n)\cdots \pi_n^N(a^N_n\mid h_n)\\
&=&\sum_{a_n^1\in A_n^1(i_n)}\cdots\sum_{a_n^N\in A^N_n(i_n)}(1-p_n(D\mid i_n,\bm{a}_n))\pi_n^1(a_n^1\mid h_n)\cdots \pi_n^N(a^N_n\mid h_n)\\
&\leq& 1-\beta_n.
\end{eqnarray*}
 Now suppose that (\ref{induction1}) holds for some $k\ge 1$ and any $n\ge0$. Then for $k+1$, we have
\begin{eqnarray*}
&&\mathbb{P}_{\gamma}^{\bm{\pi}}
(\tau_D^n>n+k+1\mid H_n=h_n)\\
&=&
\mathbb{P}_{\gamma}^{\bm{\pi}}\left(\bigcap_{m=n+1}^{k+n+1}\{X_m\in S_m\setminus D\}\Big{|} H_n=h_n\right)\\
&=&\sum_{a_n^1\in A_n^1(i_n)}\cdots\sum_{a_n^N\in A^N_n(i_n)}\sum_{i_{n+1}\in S_{n+1}\setminus D}\mathbb{P}_{\gamma}^{\bm{\pi}}
\left(\bigcap_{m=n+2}^{k+n+1}
\{X_m\in S_m\setminus D\}\Big{|} H_{n+1}=h_{n+1}\right)\\
&&~\times {p}_n(i_{n+1}\mid i_n,\bm{a}_n)
\pi_n^1(a_n^1\mid h_n)\cdots \pi_n^N(a^N_n\mid h_n)\\
&\leq&\sum_{a_n^1\in A_n^1(i_n)}\cdots\sum_{a_n^N\in A^N_n(i_n)}\sum_{i_{n+1}\in S_{n+1}\setminus D} (1-\beta_{n+1})(1-\beta_{n+2})\cdots(1-\beta_{n+k}){p}_n(i_{n+1}\mid i_n,\bm{a}_n)\\
&&~
\pi_n^1(a_n^1\mid h_n)\cdots \pi_n^N(a^N_n\mid h_n)\\
&\leq&(1-\beta_n)(1-\beta_{n+1})\cdots(1-\beta_{n+k}),
\end{eqnarray*}
where
$\bm{r}_n(i_n,\bm{a}_n):=(r^1_n(i_n,\bm{a}_n), \ldots, r^N_n(i_n,\bm{a}_n))$.
Hence, by induction we obtain (\ref{induction1}), which implies for any fixed $n\ge0$,
\begin{eqnarray*}
1\geq \mathbb{P}_{\gamma}^{\bm{\pi}}(\tau_D^n<\infty\mid H_n=h_n)
&=&1-\lim_{k\to\infty}\mathbb{P}_{\gamma}^{\bm{\pi}}(\tau_D^n>n+k\mid H_n=h_n)\\
&\ge&1- \lim_{k\to\infty}(1-\beta_n)(1-\beta_{n+1})\cdots(1-\beta_{n+k})\\
&=&1-\prod_{i=n}^{\infty}(1-\beta_i)=1,
\end{eqnarray*}
 where the last equality is due to the fact that $\sum_{n=0}^{+\infty}\beta_n=+\infty$ and $1>\beta_n\geq0$ for all $n\ge 0$.
Thus we can derive Assumption A.
\end{proof}

 \begin{remark} Obviously, the condition in Proposition 1 for the nonstationary case  is an extension of  Condition 1 in \cite{Huang2020} for the stationary model. Moreover, we will give an example in Section 5 (see Remark \ref{rem3}), for which the condition in Proposition 1  is  satisfied, but for which those  in \cite{Huang2020}  fail to hold.
\end{remark}

To state {\color{black}the} comparison theorem, let $\mathscr{P}(U)$ be the family of probability measures on any Borel set $U$ endowed with the weak topology, and for each $k\in I$, let $\mathscr{U}_n^k$ ($n\ge0$) be the set of functions $u: (S_n\setminus D)\times \mathbb{Q}^N\to[0,1]$, such that 
$u(i,\bm{\lambda})=1$ for each $i\in S_n\setminus D$ if $\bm{\lambda}=(\lambda^1,\ldots,\lambda^N)$ with $\lambda^k<0$.

In addition, for each $k\in I$,  $n\ge 0$, $\bm{\phi}:=(\phi^1,\ldots,\phi^N)\in\times_{k=1}^N\mathscr{P}(A_n^k(i))$ , we introduce the operators $T_{n,k}^{\bm{\phi}}$ from $\mathscr{U}_{n+1}^k$ to $\mathscr{U}_n^k$ as follows. For $i\in S_n\setminus D$,   and $u\in \mathscr{U}_{n+1}^k$,
\begin{eqnarray}
T_{n,k}^{\bm{a}}u(i,\bm{\lambda})&:=&\mathbb{I}_{[\lambda^k,\infty)}(r^k_n(i,\bm{a}))p_n(D\mid i,\bm{a})+\sum_{j\in S_{n+1}\setminus D}u(j,\bm{\lambda}-\bm{r}_n(i,\bm{a}))p_n(j\mid i,\bm{a}), \ \ \ \label{T1}\\
T_{n,k}^{\bm{\phi}}u(i,\bm{\lambda})&:=&\sum_{a^1\in A_n^1(i)}\ldots\sum_{a^N\in A_n^N(i)}T_{n,k}^{\bm{a}}u(i,\bm{\lambda})\phi^1(a^1)\ldots\phi^N(a^N). \label{T2}
\end{eqnarray}
 Obviously, $T_{n,k}^{\bm{\phi}}u(i,\bm{\lambda})\equiv 1$ when $\lambda^k<0$.

Now we give {\color{black}the} comparison theorem for the probability criterion of the nonstationary nonzero-sum games as follows.
\begin{theorem}\label{thm1}(Comparison Theorem). Under Assumption A,
the following assertions hold.
\begin{description}
\item[(a)]
For any fixed $k\in I$ and policy  $\bm{\pi}=(\pi^1,\pi^2,\ldots,\pi^N)\in \bm{\Pi}$ with $\pi^k= \{\pi_n^k,n\ge0\}$, if a sequence $\{u_n,n\geq 0\}$ of functions $u_n\in\mathscr{U}_n^k$ {satisfies}
\begin{eqnarray}\label{thm1.3}
u_n(i,\bm{\lambda})\leq(\ge)
T_{n,k}^{\bm{\pi}_n(\cdot\mid h_n)}u_{n+1}(i,\bm{\lambda}) \ \ \forall (i,\bm{\lambda})\in (S_{n}\setminus D)\times \mathbb{Q}^N,~h_n\in \mathbb{H}_n, ~n\ge 0,
\end{eqnarray}
where $\bm{\pi}_n(\cdot\mid h_n):=(\pi_n^1(\cdot\mid h_n),\ldots,\pi_n^N(\cdot\mid h_n))$, then
 \begin{eqnarray*}
&&u_0(i,\bm{\lambda})\leq(\ge) F_0^{k}(i,\bm{\lambda},\bm{\pi})~~\forall~(i,\bm{\lambda})\in (S_{0}\setminus D)\times\mathbb{Q}^N.
\end{eqnarray*}
\item[(b)]  For any fixed $k\in I$ and policy  $\bm{\pi}=(\pi^1,\pi^2,\ldots,\pi^N)\in \bm{\Pi}_m$ with $\pi^k= \{\phi_n^k,n\ge0\}$, if a sequence $\{u_n,n\geq 0\}$ of functions $u_n\in\mathscr{U}_n^k$ {satisfies}
\begin{eqnarray}
	u_n(i,\bm{\lambda})\leq(\ge)
	T_{n,k}^{\bm{\phi}_n(\cdot\mid i,\bm{\lambda})}u_{n+1}(i,\bm{\lambda}) \ \ \forall (i,\bm{\lambda})\in (S_{n}\setminus D)\times \mathbb{Q}^N,~h_n\in \mathbb{H}_n, ~n\ge 0,
\end{eqnarray}
where $\bm{\phi}_n(\cdot\mid i,\bm{\lambda}):=(\phi_n^1(\cdot\mid i,\bm{\lambda}),\ldots,\phi_n^N(\cdot\mid i,\bm{\lambda}))$ , then
\begin{eqnarray*}
	&&u_n(i,\bm{\lambda})\leq(\ge) F_n^{k}(i,\bm{\lambda},\bm{\pi})~~\forall~(i,\bm{\lambda})\in (S_n\setminus D)\times\mathbb{Q}^N~\text{and}~n\ge0.
\end{eqnarray*}
\item[(c)] For any fixed $k\in I$ and policy  $\bm{\pi}=(\pi^1,\pi^2,\ldots,\pi^N)\in \bm{\Pi}_m$ with $\pi^k= \{\phi_n^k,n\ge0\}$, the sequence  $\{F_n^k(i,\bm{\lambda},\bm{\pi}),n\ge 0\}$ is the {\color{black}unique solution of:}
\begin{eqnarray}\label{thm1.1}
	u_n(i,\bm{\lambda})=
T_{n,k}^{\bm{\phi}_n(\cdot\mid i,\bm{\lambda})}u_{n+1}(i,\bm{\lambda})~~\forall~(i,\bm{\lambda})\in (S_n\setminus D)\times\mathbb{Q}^N~\text{and}~n\ge0.
\end{eqnarray}
\end{description}
\end{theorem}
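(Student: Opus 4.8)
The plan is to derive all three assertions from a single stopped-process comparison, with the passage to the limit supplied by Assumption A. The key reformulation I will use everywhere is that, by the goal dynamics $\Lambda_{m+1}^k=\Lambda_m^k-r_m^k(X_m,Y_m)$, the event $\{\sum_{t=n}^{\tau_D^n-1}r_t^k(X_t,Y_t)\ge\lambda^k\}$ coincides with $\{\Lambda_{\tau_D^n}^k\le0\}$; thus $F_n^k(i,\bm{\lambda},\bm{\pi})=\mathbb{P}_{\gamma}^{\bm{\pi}}(\Lambda_{\tau_D^n}^k\le0\mid X_n=i,\bm{\Lambda}_n=\bm{\lambda})$. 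The one computation I will reuse is the one-step identity: for $i\in S_n\setminus D$, since $\mathbb{I}_{[\lambda^k,\infty)}(r_n^k(i,\bm{a}))=\mathbb{I}_{\{\lambda^k-r_n^k(i,\bm{a})\le0\}}$ and $\bm{\Lambda}_{n+1}=\bm{\lambda}-\bm{r}_n(i,\bm{a})$ on $\{Y_n=\bm{a}\}$,
$$T_{n,k}^{\bm{a}}u(i,\bm{\lambda})=\mathbb{E}_{\gamma}^{\bm{\pi}}\big[\mathbb{I}_{\{X_{n+1}\in D,\,\Lambda_{n+1}^k\le0\}}+u(X_{n+1},\bm{\Lambda}_{n+1})\mathbb{I}_{\{X_{n+1}\notin D\}}\mid X_n=i,\bm{\Lambda}_n=\bm{\lambda},Y_n=\bm{a}\big],$$
so $T_{n,k}^{\bm{a}}$ is exactly the one-step conditional expectation of the ``absorbed-or-continued'' payoff.

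For part (a), fix $k\in I$, $\bm{\pi}\in\bm{\Pi}$ and $(i,\bm{\lambda})\in(S_0\setminus D)\times\mathbb{Q}^N$, and set
$$W_m:=\mathbb{I}_{\{\tau_D\le m,\,\Lambda_{\tau_D}^k\le0\}}+\mathbb{I}_{\{\tau_D>m\}}\,u_m(X_m,\bm{\Lambda}_m),\qquad m\ge0,$$
which is $\sigma(H_m)$-measurable and takes values in $[0,1]$. I would first verify a one-step relation by splitting on the $\sigma(H_m)$-event $\{\tau_D\le m\}$ and its complement: on $\{\tau_D\le m\}$ both $W_m$ and $W_{m+1}$ equal the frozen quantity $\mathbb{I}_{\{\Lambda_{\tau_D}^k\le0\}}$, so $\mathbb{E}_{(i,\bm{\lambda})}^{\bm{\pi}}[W_{m+1}\mid H_m]=W_m$ there; on $\{\tau_D>m\}$ we have $X_m\in S_m\setminus D$, and averaging the identity above over $Y_m\sim\bm{\pi}_m(\cdot\mid H_m)$ gives $\mathbb{E}_{(i,\bm{\lambda})}^{\bm{\pi}}[W_{m+1}\mid H_m]=T_{m,k}^{\bm{\pi}_m(\cdot\mid H_m)}u_{m+1}(X_m,\bm{\Lambda}_m)$. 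Under hypothesis (\ref{thm1.3}) in its ``$\le$'' (resp. ``$\ge$'') form this makes $\{W_m\}$ a submartingale (resp. supermartingale), whence $u_0(i,\bm{\lambda})=\mathbb{E}_{(i,\bm{\lambda})}^{\bm{\pi}}W_0\le(\ge)\mathbb{E}_{(i,\bm{\lambda})}^{\bm{\pi}}W_m$ for all $m$. Assumption A gives $\tau_D<\infty$ a.s., so $\mathbb{I}_{\{\tau_D>m\}}u_m(X_m,\bm{\Lambda}_m)\to0$ and $W_m\to\mathbb{I}_{\{\Lambda_{\tau_D}^k\le0\}}$ a.s.; bounded convergence then identifies $\lim_m\mathbb{E}_{(i,\bm{\lambda})}^{\bm{\pi}}W_m=F_0^k(i,\bm{\lambda},\bm{\pi})$, yielding $u_0\le(\ge)F_0^k$.

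Part (b) is the identical argument started at time $n$. For $\bm{\pi}\in\bm{\Pi}_m$ the kernel collapses, $\bm{\pi}_m(\cdot\mid h_m)=\bm{\phi}_m(\cdot\mid i_m,\bm{\lambda}_m)$, so its hypothesis coincides with that of (a) at every stage $m\ge n$; replacing $\tau_D$ by $\tau_D^n$ and $W_m$ by $W_m^{(n)}:=\mathbb{I}_{\{\tau_D^n\le m,\,\Lambda_{\tau_D^n}^k\le0\}}+\mathbb{I}_{\{\tau_D^n>m\}}u_m(X_m,\bm{\Lambda}_m)$ for $m\ge n$, and conditioning on $X_n=i,\bm{\Lambda}_n=\bm{\lambda}$, the same sub/supermartingale computation together with the Markov property of $\{X_m,\bm{\Lambda}_m,Y_m\}$ (which makes $F_n^k$ the value from time $n$) and Assumption A (which gives $\tau_D^n<\infty$ from $S_n\setminus D$) delivers $u_n\le(\ge)F_n^k$ for every $n\ge0$. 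Part (c) then follows in two short steps: for existence I would check $F_n^k\in\mathscr{U}_n^k$ (it is a probability, and equals $1$ when $\lambda^k<0$ because the nonnegative rewards force $\sum_{t}r_t^k\ge0>\lambda^k$) and obtain the recursion (\ref{thm1.1}) by conditioning $F_n^k$ on the first transition out of $i\in S_n\setminus D$, using $\tau_D^n=\tau_D^{n+1}$ on $\{X_{n+1}\notin D\}$ and the Markov property to recognise the continuation term as $F_{n+1}^k(X_{n+1},\bm{\Lambda}_{n+1},\bm{\pi})$; for uniqueness I would apply part (b) to an arbitrary solution $\{u_n\}$ of (\ref{thm1.1}) with both inequalities, giving $u_n\le F_n^k$ and $u_n\ge F_n^k$ simultaneously, hence $u_n=F_n^k$.

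I expect the main difficulty to lie in part (a): designing $W_m$ so that the single operator identity reproduces both the absorption indicator and the continuation term, and cleanly separating the already-absorbed event, on which $W_m$ is frozen, from the still-alive event, on which hypothesis (\ref{thm1.3}) is invoked. The limiting step is the place where Assumption A is indispensable---without the a.s.\ finiteness of $\tau_D$ (resp.\ $\tau_D^n$) the tail $\mathbb{I}_{\{\tau_D>m\}}u_m$ need not vanish and the comparison would break down.
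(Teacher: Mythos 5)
Your proposal is correct and is essentially the paper's own argument in cleaner packaging: your process $W_m$ (absorbed indicator plus continuation term) is exactly the quantity the paper bounds by induction in (\ref{induction3}), your one-step sub/supermartingale inequality is the paper's inductive step, and the passage $m\to\infty$ via Assumption A and bounded convergence, the restart-at-time-$n$ argument for (b), and the first-transition decomposition plus two-sided application of (b) for (c) all coincide with the paper's proof. The only difference is presentational (martingale language for the iteration rather than an explicit induction), not a different mathematical route.
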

\begin{proof}
	(a).  For any fixed $k\in I$ and policy $\bm{\pi}\in \bm{\Pi}$,
	we first prove by induction that
	\begin{eqnarray}\label{induction3}
		u_0(X_0,\bm{\Lambda}_0)\mathbb{I}_{\{X_0\in S_0\setminus D\}}&\leq&
		\mathbb{E}_{\gamma}^{\bm{\pi}}
		[\mathbb{I}_{(-\infty,0]}(\Lambda^k_0-\sum_{m=0}^{\tau_{D}-1}
		r^k_m(X_m,Y_m))
		\mathbb{I}_{\{X_0\in S_0\setminus D\}}\mathbb{I}_{\{\tau_D<n+1\}}\mid X_0,\bm{\Lambda}_0]\nonumber\\
		&&+\mathbb{E}_{\gamma}^{\bm{\pi}}
		[u_n(X_n,\bm{\Lambda}_n)
		\mathbb{I}_{\{\cap_{m=0}^{n}\{X_m\in S_m\setminus D\}\}}\mid X_0,\bm{\Lambda}_0]~~\forall~n\ge 1,
	\end{eqnarray}
	where $\bm{\Lambda}_{m+1}=(\Lambda_{m+1}^1,\ldots,\Lambda_{m+1}^N):=\bm{\Lambda}_m-\bm{r}_m(X_m,Y_m)$   for any $m\ge 0$.
	
For $n=1$, by (\ref{thm1.3}) we have
	\begin{eqnarray}\label{iterating}
		&&u_0(X_0,\bm{\Lambda}_0)\mathbb{I}_{\{X_0\in S_0\setminus D\}}\nonumber\\
		&\leq&
		\sum_{a^1_0\in A^1_0(X_0)}\ldots\sum_{a^N_0\in A^N_0(X_0)}\bigg[\mathbb{I}_{[\Lambda_0,\infty)}(r^k_0(X_0,\bm{a}_0))p_0(D\mid X_0,\bm{a}_0)+\sum_{i_1\in S_{1}\setminus D}u_1(i_1,\bm{\Lambda}_0-\bm{r}_0(X_0,\bm{a}_0))\nonumber\\
		&&\times p_0(i_1\mid X_0,\bm{a}_0)\bigg]\pi_0^1(a_0^1\mid X_0,\bm{\Lambda}_0)\ldots\pi^N_0(a_0^N\mid X_0,\bm{\Lambda}_0)
		\mathbb{I}_{\{X_0\in S_0\setminus D\}}\nonumber\\
		&=&\mathbb{E}_{\gamma}^{\bm{\pi}}
		[\mathbb{I}_{(-\infty,0]}(\Lambda_1^k)\mathbb{I}_{\{X_0\in S_0\setminus D\}}
		\mathbb{I}_{\{\tau_D=1\}}
		+u_1(X_1,\bm{\Lambda}_1)\mathbb{I}_{\{X_0\in S_0\setminus D\}}\mathbb{I}_{\{X_1\in S_1\setminus D\}}
		\mid X_0,\bm{\Lambda}_0]\nonumber\\
		&=&\mathbb{E}_{\gamma}^{\bm{\pi}} [\mathbb{I}_{(-\infty,0]}(\Lambda_0^k-\sum_{m=0}^{\tau_{D}-1}
		r^k_m(X_m,Y_m))\mathbb{I}_{\{X_0\in S_0\setminus D\}}
		\mathbb{I}_{\{\tau_D<2\}}\mid X_0,\bm{\Lambda}_0]\nonumber\\
		&&+\mathbb{E}_{\gamma}^{\bm{\pi}}
		[u_1(X_1,\bm{\Lambda}_1)\mathbb{I}_{\{\cap_{m=0}^1\{X_m\in (S_m\setminus D)\}\}}
		\mid X_0,\bm{\Lambda}_0].
	\end{eqnarray}
	Now we assume that (\ref{induction3}) holds for some $T\ge 1$.
	\\Then, for $T+1$, by (\ref{thm1.3})  and the induction hypothesis, we have
	\begin{eqnarray*}
		&&u_0(X_0,\bm{\Lambda}_0)\mathbb{I}_{\{X_0\in S_0\setminus D\}}\\
		&\leq&
		\mathbb{E}_{\gamma}^{\bm{\pi}}
		[\mathbb{I}_{(-\infty,0]}(\Lambda_0^k-\sum_{m=0}^{\tau_{D}-1}
		r^k_m(X_m,Y_m))
		\mathbb{I}_{\{X_0\in S_0\setminus D\}}\mathbb{I}_{\{\tau_D<T+1\}}\mid X_0,\bm{\Lambda}_0]\nonumber\\
		&&+\mathbb{E}_{\gamma}^{\bm{\pi}}
		[u_T(X_T,\bm{\Lambda}_T)
		\mathbb{I}_{\{\cap_{m=0}^T\{X_m\in S_m\setminus D\}\}}\mid X_0,\bm{\Lambda}_0]\\
		&=&
		\mathbb{E}_{\gamma}^{\bm{\pi}} [\mathbb{I}_{(-\infty,0]}(\Lambda_0^k-\sum_{m=0}^{\tau_{D}-1}
		r^k_m(X_m,Y_m))
		\mathbb{I}_{\{X_0\in S_0\setminus D\}}
		\mathbb{I}_{\{\tau_D<T+1\}}\mid X_0,\bm{\Lambda}_0]\nonumber\\
		&&+\mathbb{E}_{\gamma}^{\bm{\pi}}
		[\mathbb{E}_{\gamma}^{\bm{\pi}}
		[u_T(X_T,\bm{\Lambda}_T)
		\mathbb{I}_{\{\cap_{m=0}^T\{X_m\in S_m\setminus D\}\}}\mid X_0,\bm{\Lambda}_0,Y_0,\ldots,X_T,\bm{\Lambda}_T]\mid X_0,\bm{\Lambda}_0]\\
		&\leq&
		\mathbb{E}_{\gamma}^{\bm{\pi}} [\mathbb{I}_{(-\infty,0]}(\Lambda_0^k-\sum_{m=0}^{\tau_{D}-1}
		r^k_m(X_m,Y_m))
		\mathbb{I}_{\{X_0\in S_0\setminus D\}}\mathbb{I}_{\{\tau_D<T+1\}}\mid X_0,\bm{\Lambda}_0]\nonumber\\
		&&+\mathbb{E}_{\gamma}^{\bm{\pi}}
		\bigg[
		\sum_{a_T^1\in A_T^1(X_T)}\ldots\sum_{a_T^N\in A_T^N(X_T)}\bigg\{\mathbb{I}_{(-\infty,0]}(\Lambda_T^k-r^k_T(X_T,\bm{a}_T))p_T(D\mid X_T,\bm{a}_T)\nonumber\\
		&&
		+\sum_{i_{T+1}\in S_{T+1}\setminus D}u_{T+1}(i_{T+1},\bm{\Lambda}_T-\bm{r}_T(X_T,\bm{a}_T))p_T(i_{T+1}\mid X_T,\bm{a}_T)\bigg\}
		\nonumber\\
		&&\times\pi_T^1(a_T^1\mid X_0,\bm{\Lambda}_0,Y_0,\ldots,X_T,\bm{\Lambda}_T)
		\ldots\pi_T^N(a_T^N\mid X_0,\bm{\Lambda}_0,Y_0,\ldots,X_T,\bm{\Lambda}_T)\nonumber\\
		&&
		\times\mathbb{I}_{\{\cap_{m=0}^T\{X_m\in S_m\setminus D\}\}}\bigg{|} X_0,\bm{\Lambda}_0\bigg]\\
		&=&\mathbb{E}_{\gamma}^{\bm{\pi}} [\mathbb{I}_{(-\infty,0]}(\Lambda_0^k-\sum_{m=0}^{\tau_{D}-1}r^k_m(X_m,Y_m))
		\mathbb{I}_{\{X_0\in S_0\setminus D\}}\mathbb{I}_{\{\tau_D<T+1\}}\mid X_0,\bm{\Lambda}_0]\nonumber\\
		&&+\mathbb{E}_{\gamma}^{\bm{\pi}}
		[\mathbb{E}_{\gamma}^{\bm{\pi}}
		[\mathbb{I}_{(-\infty,0]}(\Lambda^k_{T+1})
		\mathbb{I}_{\{\cap_{m=0}^T\{X_m\in S_m\setminus D\}\}}
		\mathbb{I}_{\{\tau_D=T+1\}}\mid X_0,\bm{\Lambda}_0,Y_0,\ldots,X_T,\bm{\Lambda}_T]\\
		&&+\mathbb{E}_{\gamma}^{\bm{\pi}}
		[u_{T+1}(X_{T+1},\bm{\Lambda}_{T+1})
		\mathbb{I}_{\{\cap_{m=0}^{T+1}\{X_m\in S_m\setminus D\}\}}
		\mid X_0,\bm{\Lambda}_0,Y_0,\ldots,X_T,\bm{\Lambda}_T]
		\mid X_0,\bm{\Lambda}_0]\\
		&=&\mathbb{E}_{\gamma}^{\bm{\pi}} [\mathbb{I}_{(-\infty,0]}(\Lambda_0^k-\sum_{m=0}^{\tau_{D}-1}
		r^k_m(X_m,Y_m))
		\mathbb{I}_{\{X_0\in S_0\setminus D\}}
		\mathbb{I}_{\{\tau_D<T+1\}}\mid X_0,\bm{\Lambda}_0]\nonumber\\
		&&+\mathbb{E}_{\gamma}^{\bm{\pi}}
		[\mathbb{I}_{(-\infty,0]}(\Lambda_0^k-\sum_{m=0}^{\tau_{D}-1}
		r^k_m(X_m,Y_m))
		\mathbb{I}_{\{X_0\in S_0\setminus D\}}\mathbb{I}_{\{\tau_D=T+1\}}
		\mid X_0,\bm{\Lambda}_0]\nonumber\\
		&&+\mathbb{E}_{\gamma}^{\bm{\pi}}
		[u_{T+1}(X_{T+1},\bm{\Lambda}_{T+1})
		\mathbb{I}_{\{\cap_{m=0}^{T+1}\{X_m\in S_m\setminus D\}\}}\mid X_0,\bm{\Lambda}_0]\nonumber\\
		&=&\mathbb{E}_{\gamma}^{\bm{\pi}} [\mathbb{I}_{(-\infty,0]}(\Lambda_0^k-\sum_{m=0}^{\tau_{D}-1}
		r^k_m(X_m,Y_m))
		\mathbb{I}_{\{X_0\in S_0\setminus D\}}
		\mathbb{I}_{\{\tau_D<T+2\}}\mid X_0,\bm{\Lambda}_0]\nonumber\\
		&&+\mathbb{E}_{\gamma}^{\bm{\pi}}
		[u_{T+1}(X_{T+1},\bm{\Lambda}_{T+1})
		\mathbb{I}_{\{\cap_{m=0}^{T+1}\{X_m\in S_m\setminus D\}\}}\mid X_0,\bm{\Lambda}_0].
	\end{eqnarray*}
	Hence, by induction it can be proved that (\ref{induction3}) holds for all $n\ge 1$. Thus,  by Assumption A and (\ref{induction3}), it can be derived as $n\to \infty$ that
	\begin{eqnarray*}
		u_0(X_0,\bm{\Lambda}_0)\mathbb{I}_{\{X_0\in S_0\setminus D\}}&\leq&
		\lim_{n\to\infty}(\mathbb{E}_{\gamma}^{\bm{\pi}}
		[\mathbb{I}_{(-\infty,0]}(\Lambda_0^k-\sum_{m=0}^{\tau_{D}-1}
		r^k_m(X_m,Y_m))
		\mathbb{I}_{\{X_0\in S_0\setminus D\}}\mathbb{I}_{\{\tau_D<n+1\}}\mid X_0,\bm{\Lambda}_0]\nonumber\\
		&&+\mathbb{E}_{\gamma}^{\bm{\pi}}
		[u_n(X_n,\bm{\Lambda}_n)
		\mathbb{I}_{\{\tau_D\ge n\}}\mid X_0,\bm{\Lambda}_0])\nonumber\\
		&=&\mathbb{P}_{\gamma}^{\bm{\pi}}
		\left(\sum_{m=0}^{\tau_{D}-1}r^k_m(X_m,Y_m)\ge \Lambda_0^k
		\Big{|} X_0,\bm{\Lambda}_0\right)\times \mathbb{I}_{\{X_0\in S_0\setminus D\}}\nonumber\\
		&=&F_0^k(X_0,\bm{\Lambda}_0,\bm{\pi})\mathbb{I}_{\{X_0\in S_0\setminus D\}}.
	\end{eqnarray*}
	Therefore, we obtain $u_0(i,\bm{\lambda})\leq F_0^k(i,\bm{\lambda},\bm{\pi})$ for any policy $\bm{\pi}\in \bm{\Pi}$ and $(i,\bm{\lambda})\in (S_0\setminus D)\times \mathbb{Q}^N$.

	The inverse case of ``$\ge$'' can be proved similarly.

(b). Under the conditions in (a), using the similar technique as above and the properties of Markov policies, we obtain
\begin{eqnarray*}
	u_n(i,\bm{\lambda})&\leq&
	\mathbb{E}_{\gamma}^{\bm{\pi}}
	[\mathbb{I}_{(-\infty,0]}(\Lambda^k_n-\sum_{m=n}^{\tau_{D}^n-1}
	r^k_m(X_m,Y_m))
	\mathbb{I}_{\{\tau_D^n<K+1\}}\mid X_n=i,\bm{\Lambda}_n=\bm{\lambda}]\\
	&&+\mathbb{E}_{\gamma}^{\bm{\pi}}
	[u_{K}(X_{K},\bm{\Lambda}_{K})
	\mathbb{I}_{\{\tau_D^n\ge K+1\}}\mid X_n=i,\bm{\Lambda}_n=\bm{\lambda}]~~\forall~K\ge n+1.
\end{eqnarray*}
Letting $K\to\infty$, together with the similar proof above, we get $u_n(i,\bm{\lambda})\leq F_n^k(i,\bm{\lambda},\bm{\pi})$ for all $\bm{\pi}\in \bm{\Pi}_m$.
Thus, we have proved part (b).

(c). For every $n\ge0$,  $\pi^1=\{\phi_m,k\in\mathbb{N}_0\}\in \Pi^1_m$, $\pi^2=\{\psi_m,k\in \mathbb{N}_0\}\in \Pi^2_m$ and $(i,\bm{\lambda})\in (S_{n}\setminus D)\times \mathbb{Q}$, we can derive that
\begin{eqnarray*}
	&&F_n^{k}(i,\bm{\lambda},\bm{\pi})\\
	&=&\mathbb{P}_{\gamma}^{\bm{\pi}}
	\left(\sum_{l=n}^{\tau_D^n-1}r^k_l(X_l,Y_l)
	\geq \lambda^k \Big{|} X_n=i,\bm{\Lambda}_n=\bm{\lambda}\right)\\
	&=&\mathbb{P}_{\gamma}^{\bm{\pi}}
	\left(\sum_{l=n+1}^{\tau_D^n-1}
	r^k_l(X_l,Y_l)
	\geq \lambda^k-r^k_n(X_n,Y_n), X_{n+1}\in D \Big{|}
	X_n=i,\bm{\Lambda}_n=\bm{\lambda}\right)\\
	&&+\mathbb{P}_{\gamma}^{\bm{\pi}}
	\left(\sum_{l=n+1}^{\tau_D^n-1}
	r^k_l(X_l,Y_l)
	\geq \lambda^k-r^k_n(X_n,Y_n), X_{n+1}\in S_{n+1}\setminus D  \Big{|}
	X_n=i,\bm{\Lambda}_n=\bm{\lambda}\right)\\
	&=&\mathbb{P}_{\gamma}^{\bm{\pi}}
	\left(0\geq \lambda^k-r^k_n(X_n,Y_n), X_{n+1}\in D \Big{|}
	X_n=i,\bm{\Lambda}_n=\bm{\lambda}\right)\\
	&&+\sum_{j\in S_{n+1}\setminus D} \mathbb{P}_{\gamma}^{\bm{\pi}}
	\left(\sum_{l=n+1}^{\tau_D^n-1}
	r^k_l(X_l,Y_l)
	\geq \lambda^k-r^k_n(X_n,Y_n), X_{n+1}=j  \Big{|}
	X_n=i,\bm{\Lambda}_n=\bm{\lambda}\right)\\
	&=&\sum_{a^1_n\in A_n^1(i)}\ldots\sum_{a^N_n\in A_n^N(i)}
	\bigg\{\mathbb{I}_{[\lambda^k,\infty)}
	(r^k_n(i,\bm{a}_n)){p}_n(D\mid i,\bm{a}_n)\\
	&&+
	\sum_{j\in S_{n+1}\setminus D}
	\mathbb{P}_{\gamma}^{\bm{\pi}}
	\left(\sum_{l=n+1}^{\tau_D^{n+1}-1}
	r^k_l(X_l,Y_l)
	\geq \lambda^k-r^k_n(i,\bm{a}_n)\Big{|}X_{n+1}=j,
	\bm{\lambda}_{n+1}=\bm{\lambda}-\bm{r}_n(i,\bm{a}_n)\Bigg)\right.\\
	&&~~~ \times
	p_n(j\mid i,\bm{a}_n)\bigg\}
	\phi^1_n(a^1_n\mid i,\bm{\lambda})\ldots\phi^N_n(a^N_n\mid i,\bm{\lambda})\\
	&=&\sum_{a^1_n\in A_n^1(i)}\ldots\sum_{a^N_n\in A_n^N(i)}
	\bigg\{\mathbb{I}_{[\lambda^k,\infty)}
	({r}^k_n(i,\bm{a}_n)){p}_n(D\mid i,\bm{a}_n)\\
	&&+
	\sum_{j\in S_{n+1}\setminus D}
	F_{n+1}^{k}(j,\bm{\lambda}-\bm{r}_n(i,\bm{a}_n),\bm{\pi})
	p_n(j\mid i,\bm{a}_n)\bigg\}
		\phi^1_n(a^1_n\mid i,\bm{\lambda})\ldots\phi^N_n(a^N_n\mid i,\bm{\lambda})\\
	&=&
	T_{n,k}^{\bm{\phi}_n(\cdot \mid i,\bm{\lambda})}F_{n+1}^{k}(i,\bm{\lambda},\bm{\pi}),
\end{eqnarray*}
which verifies  (\ref{thm1.1}). Moreover, the uniqueness follows from Theorem 1(b).
\end{proof}

To consider the continuity of $F_{n}^{k}(i,\bm{\lambda},\bm{\pi})$ on $\bm{\Pi}_m$, we recall the definition of any sequence's convergence in $\Pi_m^k$ for any $k\in I$. Since $S_n$ and $\mathbb{Q}$ are denumerable  and $A_n^k(i)$ are finite, the set $\Pi_m^k$ is compact, and a sequence of $\pi^k_t=\{\phi_{n,t}^k,n\in \mathbb{N}_0\}(t\ge 1)$ in $\Pi_m^k$  {\color{black}converges to} a policy $\pi^k=\{\phi_n^k,n\in \mathbb{N}_0\}$ in $\Pi_m^k$ (as $t\to\infty$) if and only if $\lim_{t\to \infty}\phi_{n,t}^k(a\mid i,\bm{\lambda})=\phi_n^k(a\mid i,\bm{\lambda})$ for $a\in A_n^k(i)$, $(i,\bm{\lambda})\in S_n\times \mathbb{Q}^N$ and $n\ge 0$.

\begin{lemma}
	Under Assumption A, for any fixed $(i,\bm{\lambda})\in S_n\times \mathbb{Q}^N$ and $k\in I$, $F_{n}^{k}(i,\bm{\lambda},\bm{\pi})$ is uniformly continuous in  $\bm{\pi}\in\bm{\Pi}_m$.
	
	\begin{proof}  Given any  $k\in I$, since $\bm{\Pi}_m$ is   compact, it suffices to show the continuity of $F_{n}^{k}(i,\bm{\lambda},\bm{\pi})$ on $\bm{\Pi}_m$.
		Let $\pi^k_t=\{\phi_{n,t}^k,n\in \mathbb{N}_0\}(t\ge 1)$ in $\Pi_m^k$ such that $\pi^k_t\to\pi^k=\{\phi_{n}^k,n\in \mathbb{N}_0\}$ in $\Pi_m^k$, as $t\to \infty$. Then, for each $(i,\bm{\lambda})\in S_n\times \mathbb{Q}^N$, $n\in \mathbb{N}_0$, $u\in\mathscr{U}_{n+1}^k$, by (\ref{T1}) and (\ref{T2}), we have
		\begin{eqnarray}\label{lem1(1)}
			\lim_{t\to\infty}	T_{n,k}^{\bm{\phi}_{n,t}(\cdot \mid i,\bm{\lambda})}u(i,\bm{\lambda})=T_{n,k}^{\bm{\phi}_{n}(\cdot \mid i,\bm{\lambda})}u(i,\bm{\lambda}),
			\end{eqnarray}
		and (by Theorem 1(c))
		\begin{eqnarray}\label{lem1(2)}
			F_n^{k}(i,\bm{\lambda},\bm{\pi}_t)=
			T_{n,k}^{\bm{\phi}_{n,t}(\cdot \mid i,\bm{\lambda})}F_{n+1}^{k}(i,\bm{\lambda},\bm{\pi}_t), ~n\in\mathbb{N}_0,~t\ge1.
		\end{eqnarray}
	Thus, by (\ref{lem1(1)})-(\ref{lem1(2)}) and the extension of Fatou's Lemma (i.e. Lemma 8.3.7 in \cite{Hernandez1999}), we have
	\begin{eqnarray*}
		&&\limsup_{t\to\infty}F_n^{k}(i,\bm{\lambda},\bm{\pi}_t)\leq
		T_{n,k}^{\bm{\phi}_{n}(\cdot \mid i,\bm{\lambda})}\limsup_{t\to\infty}F_{n+1}^{k}(i,\bm{\lambda},\bm{\pi}_t),~n\in\mathbb{N}_0,\\
		&&\liminf_{t\to\infty}F_n^{k}(i,\bm{\lambda},\bm{\pi}_t)\ge
		 T_{n,k}^{\bm{\phi}_{n}(\cdot \mid i,\bm{\lambda})}\liminf_{t\to\infty}F_{n+1}^{k}(i,\bm{\lambda},\bm{\pi}_t),~n\in\mathbb{N}_0,
	\end{eqnarray*}
which, together with Theorem 1(b), implies
\begin{eqnarray*}
	\limsup_{t\to\infty}F_n^{k}(i,\bm{\lambda},\bm{\pi}_t)\leq F_n^{k}(i,\bm{\lambda},\bm{\pi}), ~~\liminf_{t\to\infty}F_n^{k}(i,\bm{\lambda},\bm{\pi}_t)\ge F_n^{k}(i,\bm{\lambda},\bm{\pi}),
	\end{eqnarray*}
and so $
	\limsup_{t\to\infty}F_n^{k}(i,\bm{\lambda},\bm{\pi}_t)=\liminf_{t\to\infty}F_n^{k}(i,\bm{\lambda},\bm{\pi}_t)= F_n^{k}(i,\bm{\lambda},\bm{\pi}).$
	\end{proof}
\end{lemma}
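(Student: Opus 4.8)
The plan is to first reduce the claimed \emph{uniform} continuity to ordinary continuity. Since $S_n$ and $\mathbb{Q}$ are denumerable and each $A_n^k(i)$ is finite, $\bm{\Pi}_m=\times_{k=1}^N\Pi_m^k$ is a countable product of finite-dimensional probability simplices and is therefore a compact metric space under the convergence notion recalled just above the statement. On a compact metric space every continuous function is automatically uniformly continuous, so it suffices to prove that $\bm{\pi}\mapsto F_n^k(i,\bm{\lambda},\bm{\pi})$ is continuous for each fixed $k$, $n$ and $(i,\bm{\lambda})$. Accordingly I would fix a convergent sequence $\bm{\pi}_t\to\bm{\pi}$ in $\bm{\Pi}_m$, with $\bm{\pi}_t=\{\bm{\phi}_{n,t}\}$ and $\bm{\pi}=\{\bm{\phi}_n\}$, and aim to show $F_n^k(i,\bm{\lambda},\bm{\pi}_t)\to F_n^k(i,\bm{\lambda},\bm{\pi})$ as $t\to\infty$.

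The engine of the argument is the fixed-point characterization from Theorem~\ref{thm1}(c): for every $t$ one has $F_n^k(i,\bm{\lambda},\bm{\pi}_t)=T_{n,k}^{\bm{\phi}_{n,t}(\cdot\mid i,\bm{\lambda})}F_{n+1}^k(i,\bm{\lambda},\bm{\pi}_t)$. The first, easy observation is that for a \emph{fixed} function $u\in\mathscr{U}_{n+1}^k$ the operator depends continuously on the policy: because $T_{n,k}^{\bm{\phi}}u$ is a finite convex combination (over the action tuples $\bm{a}\in\bm{A}_n(i)$) of the quantities $T_{n,k}^{\bm{a}}u$ with weights $\phi^1(a^1)\cdots\phi^N(a^N)$, the convergence $\bm{\phi}_{n,t}\to\bm{\phi}_n$ yields $T_{n,k}^{\bm{\phi}_{n,t}}u\to T_{n,k}^{\bm{\phi}_n}u$. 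The difficulty is that in the displayed identity both the policy \emph{and} the function $F_{n+1}^k(\cdot,\bm{\pi}_t)$ vary with $t$, and these appear under an infinite sum over $S_{n+1}\setminus D$, so a naive termwise passage to the limit is not permitted.

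To overcome this I would introduce the pointwise upper and lower envelopes $\overline{u}_n:=\limsup_{t}F_n^k(\cdot,\bm{\pi}_t)$ and $\underline{u}_n:=\liminf_{t}F_n^k(\cdot,\bm{\pi}_t)$. Each takes values in $[0,1]$ and equals $1$ whenever $\lambda^k<0$, so $\overline{u}_n,\underline{u}_n\in\mathscr{U}_n^k$. The key step is to combine the uniform bound $0\le F_{n+1}^k\le 1$ with the convergence of the averaging weights and apply the extended Fatou lemma (Lemma~8.3.7 of \cite{Hernandez1999}), letting the limit superior and inferior pass through the operator to obtain, for every $n\ge0$,
\[
\overline{u}_n\le T_{n,k}^{\bm{\phi}_n(\cdot\mid i,\bm{\lambda})}\overline{u}_{n+1},
\qquad
\underline{u}_n\ge T_{n,k}^{\bm{\phi}_n(\cdot\mid i,\bm{\lambda})}\underline{u}_{n+1}.
\]
I expect verifying the hypotheses of the generalized Fatou lemma---that the integrands are uniformly bounded and that the state-and-action averaging kernels converge in the required weak sense---to be the main technical obstacle, precisely because the summation range $S_{n+1}\setminus D$ is only denumerable rather than finite.

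Finally I would invoke the comparison theorem. Since $\{\overline{u}_n\}$ satisfies the family of inequalities $\overline{u}_n\le T_{n,k}^{\bm{\phi}_n}\overline{u}_{n+1}$ for all $n$, Theorem~\ref{thm1}(b) applied to the policy $\bm{\pi}$ gives $\overline{u}_n\le F_n^k(\cdot,\bm{\pi})$; symmetrically the ``$\ge$'' half of Theorem~\ref{thm1}(b) gives $\underline{u}_n\ge F_n^k(\cdot,\bm{\pi})$. Combining these with the trivial inequality $\underline{u}_n\le\overline{u}_n$ forces
\[
F_n^k(i,\bm{\lambda},\bm{\pi})\le\underline{u}_n(i,\bm{\lambda})\le\overline{u}_n(i,\bm{\lambda})\le F_n^k(i,\bm{\lambda},\bm{\pi}),
\]
so all terms coincide and $\lim_{t}F_n^k(i,\bm{\lambda},\bm{\pi}_t)=F_n^k(i,\bm{\lambda},\bm{\pi})$. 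This establishes continuity, and hence, by the compactness reduction of the first paragraph, the asserted uniform continuity.
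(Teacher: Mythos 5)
Your proposal is correct and follows essentially the same route as the paper's own proof: the compactness reduction to ordinary continuity, the fixed-point identity from Theorem~1(c), passing $\limsup$/$\liminf$ through the operator via the extended Fatou lemma (Lemma~8.3.7 of Hern\'andez-Lerma and Lasserre), and then squeezing with the comparison Theorem~1(b). The only difference is presentational: you make the envelope functions $\overline{u}_n,\underline{u}_n$ and their membership in $\mathscr{U}_n^k$ explicit, which the paper leaves implicit.
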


\section{The existence of a Nash equilibrium}\label{sec4}

In this section, we establish the existence of a Nash  equilibrium. To do so, we need to introduce some notation below.

Given any $\bm{\pi}\in\bm{\Pi}_m$, $n\ge1, k\in I,$ we define a function ${\bm\hat F}_n^{k}(\bm{\pi}^{-k})$ on $S_n\times \mathbb{Q}^N$ as follows:
\begin{eqnarray} {\bm\hat F}_n^{k}(i,\bm{\lambda},\bm{\pi}^{-k}):=\sup_{\pi^k\in\Pi^k_m}F_n^{k}(i,\bm{\lambda},[\bm{\pi}^{-k},\pi^k]) \ \ \ \forall~(i,\bm{\lambda})\in S_n\times \mathbb{Q}^N.
\end{eqnarray}
Moreover, we define ${\bm\hat F}_0^{k}(i,\bm{\lambda},\bm{\pi}^{-k}):=\sup_{\pi^k\in\Pi^k}F_0^{k}(i,\bm{\lambda},[\bm{\pi}^{-k},\pi^k])~ \forall~(i,\bm{\lambda})\in S_0\times \mathbb{Q}^N.
$
\begin{theorem}\label{thm2}
Under  Assumption A, the following assertions hold.
\begin{description}
\item[(a)] Given any fixed $k\in I$, the sequence  $\{	 {\bm\hat F}_n^{k}(i,\bm{\lambda},\bm{\pi}^{-k}),n\ge 0\}$ uniquely solves  the following equations in $\{\mathscr{U}^k_n,n\ge0\}$:
\begin{eqnarray}\label{optimality}
	u_n(i,\bm{\lambda})=\sup_{\phi\in\mathscr{P}(A_n^k(i))}
	T_{n,k}^{[\bm{\phi}_n^{-k}(\cdot|i,\bm{\lambda}),\phi]}u_{n+1}(i,\bm{\lambda})~~\forall (i,\bm{\lambda})\in (S_n\setminus D)\times\mathbb{Q}^N,~n\ge0,
\end{eqnarray}
and there exists $\{\phi_{n*}^{k},n\ge0\}\in \Pi_m^k$ such that
\begin{eqnarray}\label{optimality1}
 {\bm\hat F}_n^{k}(i,\bm{\lambda},\bm{\pi}^{-k})=
	T_{n,k}^{[\bm{\phi}_n^{-k}(\cdot|i,\bm{\lambda}),\phi_{n*}^{k}(\cdot|i,\bm{\lambda})]} {\bm\hat F}_{n+1}^{k}(i,\bm{\lambda},\bm{\pi}^{-k})~~\forall (i,\bm{\lambda})\in (S_n\setminus D)\times\mathbb{Q}^N,n\geq 0.
\end{eqnarray}
\item[(b)] There exists  $\bm{\pi}_*=\{\pi_*^k,k\in I\}\in\bm{\Pi}_m$ where $\pi^{k}_*=\{\psi_{n,*}^{k},n\ge0\}\in\Pi_m^k$, such that
\begin{eqnarray}\label{optimality2}
	F_n^{k}(i,\bm{\lambda},[\bm{\pi}_*^{-k},\pi^k_*])= {\bm\hat F}_n^{k}(i,\bm{\lambda},\bm{\pi}^{-k}_*)=\sup_{\phi\in \mathscr{P}(A_n^k(i))}	 T_{n,k}^{[\bm{\psi}_{n}^{-k}(\cdot|i,\bm{\lambda}),\phi]}\hat F_{n+1}^{k}(i,\bm{\lambda},\bm{\pi}_*^{-k})
\end{eqnarray}
 for all $(i,\bm{\lambda})\in (S_n\setminus D)\times\mathbb{Q}^N, k\in I,n\ge0.$
 \item[(c)] The multipolicy $\bm{\pi}_*$ obtained in (b) is a Nash equilibrium.
\end{description}
\end{theorem}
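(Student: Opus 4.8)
The plan is to read Theorem~\ref{thm2} as three layered statements and prove them in order, treating the comparison theorem (Theorem~\ref{thm1}) and the continuity of $F_n^k$ (Lemma~1) as black boxes. Part~(a) is a one-player (best-response) dynamic-programming result for player $k$ with the opponents frozen at a Markov profile $\bm\pi^{-k}$; part~(b) is a Nash-type fixed-point existence statement; and part~(c) is a short reconciliation of the Markov best-response value with the history-dependent value.

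For part~(a), I fix $k$ and a Markov $\bm\pi^{-k}$, so that player $k$ faces a nonstationary MDP with Bellman operator $\phi\mapsto T_{n,k}^{[\bm\phi_n^{-k},\phi]}$. First I would note that, for any $u_{n+1}\in\mathscr U_{n+1}^k$, this map is affine (indeed linear) in $\phi$ and continuous, while $\mathscr P(A_n^k(i))$ is a compact simplex because $A_n^k(i)$ is finite; hence the supremum in (\ref{optimality}) is attained and yields a selector $\phi_{n*}^k\in\Phi_n^k$. Next I would verify that $\{\hat F_n^k(\cdot,\bm\pi^{-k})\}$ solves (\ref{optimality}): the ``$\le$'' inequality comes from writing $F_n^k=T_{n,k}^{[\bm\phi_n^{-k},\phi_n^k]}F_{n+1}^k$ via Theorem~\ref{thm1}(c), bounding $F_{n+1}^k\le\hat F_{n+1}^k$ by monotonicity of $T_{n,k}$, and taking the supremum over $\pi^k$; the ``$\ge$'' inequality comes from concatenating the maximizing randomization $\phi^*$ at stage $n$ with $\varepsilon$-optimal Markov continuations chosen at each realized $(X_{n+1},\bm\Lambda_{n+1})$, using $\tau_D^n<\infty$ a.s.\ (Assumption~A) to decompose the criterion. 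Uniqueness in $\{\mathscr U_n^k\}$ then follows from the comparison theorem: any solution $\{u_n\}$ admits a maximizing selector, so $u_n=F_n^k$ for that policy by Theorem~\ref{thm1}(c), giving $u_n\le\hat F_n^k$; and $u_n\ge T_{n,k}^{[\bm\phi_n^{-k},\phi_n^k]}u_{n+1}$ for every $\pi^k$ gives $u_n\ge F_n^k$ by Theorem~\ref{thm1}(b), whence $u_n\ge\hat F_n^k$.

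For part~(b), I would set up the best-response correspondence $\Psi$ on the compact convex set $\bm\Pi_m$, where $\Psi(\bm\pi)$ collects all $\bm\pi'$ with $F_n^k(\cdot,[\bm\pi^{-k},\pi'^k])=\hat F_n^k(\cdot,\bm\pi^{-k})$ for every $k,n$, and apply the Kakutani--Fan--Glicksberg fixed-point theorem. Nonemptiness of $\Psi(\bm\pi)$ is part~(a). The delicate hypothesis is convexity of the values: I would show that $\pi'^k$ is a best response iff its kernels $\phi'^k_n(\cdot\mid i,\bm\lambda)$ lie pointwise in $\mathrm{Argmax}_{\phi}T_{n,k}^{[\bm\phi_n^{-k},\phi]}\hat F_{n+1}^k(i,\bm\lambda)$ (the ``if'' direction from the uniqueness in Theorem~\ref{thm1}(c), the ``only if'' direction from substituting $F_n^k=\hat F_n^k$ into Theorem~\ref{thm1}(c)); since $T_{n,k}$ is affine in $\phi$ this argmax is a face of the simplex, so the best-response set is a product of convex sets and hence convex. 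The closed graph follows from Lemma~1 (continuity of $F_n^k$ in $\bm\pi$) together with continuity of $\hat F_n^k$ in $\bm\pi^{-k}$, the latter via Berge's maximum theorem since $F_n^k$ is jointly continuous and $\Pi_m^k$ is compact. A fixed point $\bm\pi_*$ of $\Psi$ is exactly the profile required in (\ref{optimality2}).

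Finally, part~(c) needs only that the Markov best-response value agrees with the history-dependent one at $n=0$. Taking $u_n=\hat F_n^k(\cdot,\bm\pi_*^{-k})$ and observing that $\hat F_n^k\ge T_{n,k}^{[\bm\phi_n^{-k}(\cdot\mid i_n,\bm\lambda_n),\pi^k_n(\cdot\mid h_n)]}\hat F_{n+1}^k$ for every history-dependent $\pi^k$ (since $\pi^k_n(\cdot\mid h_n)$ is a feasible $\phi$), Theorem~\ref{thm1}(a) gives $F_0^k(\cdot,[\bm\pi_*^{-k},\pi^k])\le\hat F_0^k(\cdot,\bm\pi_*^{-k})$ for all $\pi^k\in\Pi^k$; hence $\hat F_0^k(\cdot,\bm\pi_*^{-k})=F^k_*(\cdot,\bm\pi_*^{-k})$, and combining with (\ref{optimality2}) at $n=0$ yields $F^k(\cdot,\bm\pi_*)=F^k_*(\cdot,\bm\pi_*^{-k})$ for every $k$, which is the Nash property. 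I expect the main obstacle to be the convexity of the best-response correspondence: because $F_n^k$ is genuinely nonlinear in $\pi^k$, convexity cannot be read off directly and must be routed through the affine-in-$\phi$ structure of the optimality equation, which is precisely what makes the optimality-equation characterization of part~(a) indispensable here.
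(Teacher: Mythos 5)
Your parts (b) and (c) follow essentially the paper's own route: a best-response correspondence built from the pointwise argmax of the affine map $\phi\mapsto T_{n,k}^{[\bm{\phi}_n^{-k}(\cdot|i,\bm{\lambda}),\phi]}\hat F_{n+1}^{k}$, convexity from linearity, a closed graph from Lemma 1, Fan's fixed-point theorem, and the reconciliation of the Markov supremum with the history-dependent one at $n=0$ via Theorem \ref{thm1}(a). The genuine gap is in the ``existence'' half of part (a), namely your ``$\ge$'' direction showing that $\{\hat F_n^{k}\}$ satisfies (\ref{optimality}). Concatenating the maximizing randomization at stage $n$ with $\varepsilon$-optimal Markov continuations chosen at each realized $(X_{n+1},\bm{\Lambda}_{n+1})$ does not produce a Markov policy: its kernels at times $m>n+1$ depend on the state at time $n+1$ and not only on the current state. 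Hence, for $n\ge 1$, the value of this concatenated policy lies outside the supremum over $\Pi_m^k$ that defines $\hat F_n^{k}$; worse, $F_n^{k}(i,\bm{\lambda},\bm{\pi})$ for $n\ge1$ is only defined in this paper for $\bm{\pi}\in\bm{\Pi}_m$, so the decomposition you invoke is not even formulated. Gluing the countably many $\varepsilon$-optimal Markov continuations (one per state $(j,\bm{\lambda}')$) into a single Markov policy fails in general, since two starting states can reach the same later state and demand conflicting kernels there; and proving that the Markov supremum equals the supremum over such concatenated (semi-Markov) policies is essentially equivalent to the optimality equation you are trying to prove, so the step is circular as written.

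The gap can be closed in two ways. The paper's way is monotone value iteration: set $u_n^{0}:=\mathbb{I}_{(-\infty,0]}(\lambda^k)$ and $u_n^{m+1}:=\sup_{\phi\in\mathscr{P}(A_n^k(i))}T_{n,k}^{[\bm{\phi}_n^{-k},\phi]}u_{n+1}^{m}$, show $u_n^{m}\uparrow u_n^{*}$, pass to the limit in the iteration to see that $u_n^{*}$ solves (\ref{optimality}), and then identify $u_n^{*}=\hat F_n^{k}$ by exactly the two-sided comparison argument you already wrote for uniqueness (Theorem \ref{thm1}(b) for ``$\ge$'', a maximizing selector plus Theorem \ref{thm1}(c) for ``$\le$''). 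Alternatively, a shorter patch uses only what you have: your ``$\le$'' step gives $\hat F_n^{k}\le\sup_{\phi}T_{n,k}^{[\bm{\phi}_n^{-k},\phi]}\hat F_{n+1}^{k}=T_{n,k}^{[\bm{\phi}_n^{-k},\phi_{n*}^{k}]}\hat F_{n+1}^{k}$, where $\phi_{n*}^{k}$ is the maximizing selector; Theorem \ref{thm1}(b) applied to the Markov multipolicy $[\bm{\pi}^{-k},\pi_*^{k}]$ with $\pi_*^{k}=\{\phi_{n*}^{k}\}$ yields $\hat F_n^{k}\le F_n^{k}(\cdot,[\bm{\pi}^{-k},\pi_*^{k}])$, while the reverse inequality holds by the definition of $\hat F_n^{k}$; the forced equality together with Theorem \ref{thm1}(c) then gives $\hat F_n^{k}=T_{n,k}^{[\bm{\phi}_n^{-k},\phi_{n*}^{k}]}\hat F_{n+1}^{k}=\sup_{\phi}T_{n,k}^{[\bm{\phi}_n^{-k},\phi]}\hat F_{n+1}^{k}$, which is (\ref{optimality}) and (\ref{optimality1}) simultaneously. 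Either repair leaves the remainder of your proposal intact.
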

\begin{proof}
(a). Given any fixed $k\in I$ and $n\ge0$, we define {\color{black}the} operator $T_{n,k}^{\bm{\phi}_{n}^{-k}}$ by
\begin{eqnarray}
T_{n,k}^{\bm{\phi}_{n}^{-k}}u(i,\bm{\lambda}):=\sup_{\phi\in\mathscr{P}(A_n^k(i))}	 T_{n,k}^{[\bm{\phi}_{n}^{-k}(\cdot|i,\bm{\lambda}),\phi]}u(i,\bm{\lambda})~~\forall~(i,\bm{\lambda})\in (S_n\setminus D)\times\mathbb{Q}^N, u\in \mathscr{U}^k_{n+1},
\end{eqnarray}
which is used to construct  {\color{black}the} sequence $\{u_n^m,m\ge0\}$ in $\mathscr{U}^k_{n}$ by
\begin{eqnarray}\label{algorithm1}
u_n^{0}(i,\bm{\lambda}):=\mathbb{I}_{(-\infty,0]}(\lambda^k)~~and ~~u_n^{m+1}(i,\bm{\lambda}):=T_{n,k}^{\bm{\phi}_{n}^{-k}}u_{n+1}^{m}(i,\bm{\lambda})~ ~\forall~m\ge 0.
\end{eqnarray}
Obviously, we have $u_n^m(\cdot,\cdot)\in\mathscr{U}_n^k$ for all $m\ge0$.
Next we prove the fact that
\begin{eqnarray}\label{2.5}
u_n^{m+1}(i,\bm{\lambda})\geq u_n^{m}(i,\bm{\lambda})~\forall (i,\bm{\lambda})\in (S_n\setminus D)\times\mathbb{Q}^N, m\ge0.
\end{eqnarray}
If $\lambda^k >0$,  then  $u_n^{0}(i,\bm{\lambda})=0$. So $u_n^{1}(i,\bm{\lambda})\geq u_n^{0}(i,\bm{\lambda})$, that is,  (\ref{2.5}) is  true for $m=0$ when $\lambda^k>0$.  If $\lambda^k\le0$,  by (\ref{algorithm1}) and $r_n^k\geq 0$, we have
\begin{eqnarray*}
&&u_n^1(i,\bm{\lambda})\nonumber\\
&=&\sup_{\phi\in\mathscr{P}(A_n^k(i))}
\bigg\{\sum_{a^1\in A_n^1(i)}\ldots\sum_{a^N\in A_n^N(i)}\bigg[\mathbb{I}_{[\lambda^k,\infty)}(r_n^k(i,\bm{a}))
p_n(D\mid i,\bm{a})\nonumber\\
&&+\sum_{j\in S_{n+1}\setminus D}u_{n+1}^{0}(j,\bm{\lambda}-\bm{r}_n(i,\bm{a}))p_n(j\mid i,\bm{a})\bigg]\phi(a^k)\prod_{l\in I,l\ne k}\phi_n^l(a^l)\bigg\}\nonumber\\
&=&\sup_{\phi\in\mathscr{P}(A_n^k(i))}
\bigg\{\sum_{a^1\in A_n^1(i)}\ldots\sum_{a^N\in A_n^N(i)}\bigg[p_n(D\mid i,\bm{a})+\sum_{j\in S_{n+1}\setminus D} p_n(j\mid i,\bm{a})\bigg]\phi(a^k)\prod_{l\in I,l\ne k}\phi_n^l(a^l)\bigg\}\nonumber\\
&=&1\geq u_n^{0}(i,\bm{\lambda}),
\end{eqnarray*}
Since $T_{n,k}^{\bm{\phi}_{n}^{-k}}$ is a monotone operator, we have  $u_n^{m+1}(i,\bm{\lambda})\geq u^{m}_n(i,\bm{\lambda})$ for all $(i,\bm{\lambda})\in (S_{n}\setminus D)\times\mathbb{Q^N}$ and $m\ge0$.
Thus, the limit  $\lim\limits_{m\to\infty}u_n^{m}(i,\bm{\lambda})
=:u_n^*(i,\bm{\lambda})$ exists. Since the limit of the measurable functions is measurable, we can obtain $u^*_n(i,\bm{\lambda})\in \mathscr{U}_n^k$.
Next we will prove that $\{u_n^*(i,\bm{\lambda}),n\ge 0\}$ satisfies {the following  equation:}
\begin{eqnarray}\label{2.6}
u_n^*(i,\bm{\lambda})=T_{n,k}^{\bm{\phi}_{n}^{-k}}u_{n+1}^*(i,\bm{\lambda})~~\forall~n\ge0.
\end{eqnarray}
By (\ref{2.5}) and $u_n^*(i,\bm{\lambda})=\lim\limits_{m\to\infty}u^{m}_n(i,\bm{\lambda})
$, we know the right side of (\ref{2.6})  {\color{black}satisfies}
\begin{eqnarray*}
T_{n,k}^{\bm{\phi}_{n}^{-k}}u_{n+1}^*(i,\bm{\lambda})\geq T_{n,k}^{\bm{\phi}_{n}^{-k}}u_{n+1}^{m}(i,\bm{\lambda})
=u_n^{m+1}(i,\bm{\lambda})~~\forall~(i,\bm{\lambda})\in (S_{n}\setminus D)\times \mathbb{Q}^N,~m\ge 0~and~n\ge0,
\end{eqnarray*}
which implies that
\begin{eqnarray}\label{2.7}
T_{n,k}^{\bm{\phi}_{n}^{-k}}u_{n+1}^*(i,\bm{\lambda})\geq u_n^*(i,\bm{\lambda})~~\forall~(i,\bm{\lambda})\in (S_{n}\setminus D)\times \mathbb{Q}^N,~n\ge0.
\end{eqnarray}
Now we show the reverse inequality. It follows from (\ref{algorithm1}) that
\begin{eqnarray*}
u_n^{m+1}(i,\bm{\lambda})
&=&
\sup_{\phi\in\mathscr{P}(A_n^k(i))}
\bigg\{{\color{black}\sum_{a^1\in A_n^1(i)}\ldots\sum_{a^N\in A_n^N(i)}}\bigg[\mathbb{I}_{[\lambda^k,\infty)}(r_n^k(i,\bm{a}))
p_n(D\mid i,\bm{a})\nonumber\\
&&+\sum_{j\in S_{n+1}\setminus D}u_{n+1}^{m}(j,\bm{\lambda}-\bm{r}_n(i,\bm{a}))p_n(j\mid i,\bm{a})\bigg]\phi(a^k)\prod_{l\in I,l\ne k}\phi_n^l(a^l)\bigg\}.
\end{eqnarray*}
{\color{black}Thus, 
for any $\phi\in\mathscr{P}(A_n^k(i))$, we have 
\begin{eqnarray}\label{2.8}
	u_n^{m+1}(i,\bm{\lambda})
	&\geq&\sum_{a^1\in A_n^1(i)}\ldots\sum_{a^N\in A_n^N(i)}\bigg[\mathbb{I}_{[\lambda^k,\infty)}(r_n^k(i,\bm{a}))
	p_n(D\mid i,\bm{a})\nonumber\\
	&&+\sum_{j\in S_{n+1}\setminus D}u_{n+1}^{m}(j,\bm{\lambda}-\bm{r}_n(i,\bm{a}))p_n(j\mid i,\bm{a})\bigg]\phi(a^k)\prod_{l\in I,l\ne k}\phi_n^l(a^l).
\end{eqnarray}
Taking $m\to\infty$ in (\ref{2.8}), by Proposition C.12 in \cite{Hernandez1996} we have
\begin{eqnarray*}
u_n^*(i,\bm{\lambda})
&\geq&\sum_{a^1\in A_n^1(i)}\ldots\sum_{a^N\in A_n^N(i)}\bigg[\mathbb{I}_{[\lambda^k,\infty)}(r_n^k(i,\bm{a}))
p_n(D\mid i,\bm{a})\nonumber\\
&&+\sum_{j\in S_{n+1}\setminus D}u_{n+1}^{*}(j,\bm{\lambda}-\bm{r}_n(i,\bm{a}))p_n(j\mid i,\bm{a})\bigg]\phi(a^k)\prod_{l\in I,l\ne k}\phi_n^l(a^l).
\end{eqnarray*}
Then we get
\begin{eqnarray}\label{2.9}
u_n^*(i,\bm{\lambda})
&\geq&\sup_{\phi\in\mathscr{P}(A_n^k(i))}\bigg\{\sum_{a^1\in A_n^1(i)}\ldots\sum_{a^N\in A_n^N(i)}\bigg[\mathbb{I}_{[\lambda^k,\infty)}(r_n^k(i,\bm{a}))
p_n(D\mid i,\bm{a})\nonumber\\
&&+\sum_{j\in S_{n+1}\setminus D}u_{n+1}^{*}(j,\bm{\lambda}-\bm{r}_n(i,\bm{a}))p_n(j\mid i,\bm{a})\bigg]\phi(a^k)\prod_{l\in I,l\ne k}\phi_n^l(a^l)\bigg\}\nonumber\\
&\geq& T_{n,k}^{\bm{\phi}_{n}^{-k}}u_{n+1}^*(i,\bm{\lambda}),
\end{eqnarray}}
which, together with (\ref{2.7}), gets (\ref{2.6}). So $\{u_n^*(i,\bm{\lambda}),n\ge 0\}$ satisfies (\ref{2.6}).

To prove  (a), it suffices to show $u_n^*(i,\bm{\lambda})={\bf\hat F}_n^{k}(i,\bm{\lambda},\bm{\pi}^{-k})$ for all $(i,\bm{\lambda})\in (S_{n}\setminus D)\times \mathbb{Q}^N$ and  $n\ge 0$.
For arbitrary ${\pi}^k=\{\psi^k_n,n\ge0\}\in\Pi_m^k$, by (\ref{2.6}) we have
\begin{eqnarray}\label{o1}
	u_n^*(i,\bm{\lambda})
= \sup_{\phi\in\mathscr{P}(A_n^k(i))}T_{n,k}^{[\bm{\phi}_{n}^{-k}(\cdot|i,\bm{\lambda}),\phi]}u_{n+1}^*(i,\bm{\lambda})
\geq T_{n,k}^{[\bm{\phi}_{n}^{-k}(\cdot|i,\bm{\lambda}),\psi^k_n(\cdot|i,\bm{\lambda})]}u_{n+1}^*(i,\bm{\lambda}),
\end{eqnarray}
together with Theorem 1(b) and the arbitrariness of ${\pi}^k$, gives
\begin{eqnarray}\label{o2}
	u_n^*(i,\bm{\lambda})
	\geq \sup_{{\pi}^k\in\Pi_m^k}F_{n}^{k}(i,\bm{\lambda},[\bm{\pi}^{-k},{\pi}^k])
	={\bf\hat F}_n^{k}(i,\bm{\lambda},\bm{\pi}^{-k}).
\end{eqnarray}
Similarly, by Theorem 1(a), we can get $u_0^*(i,\bm{\lambda})
\geq {\bf\hat F}_0^{k}(i,\bm{\lambda},\bm{\pi}^{-k}).$
\\On the other hand, by (\ref{2.6}), there exists an $\phi_{n*}^{k}\in \Phi_n^k$ such that
\begin{eqnarray}\label{o3}
	u_n^*(i,\bm{\lambda})
	= T_{n,k}^{[\bm{\phi}_{n}^{-k}(\cdot|i,\bm{\lambda}),\phi_{n*}^{k}(\cdot|i,\bm{\lambda})]}u_{n+1}^*(i,\bm{\lambda}) \ \forall (i,\bm{\lambda})\in (S_{n}\setminus D)\times \mathbb{Q}^N,n\ge 0.
\end{eqnarray}
Let $\pi^{k}_*=\{\phi_{n*}^{k},n\geq 0\}$,  $\bm{\pi}_*= [\bm{\pi}^{-k},{\pi}^k_*]\in\bm{\Pi}_m$. Then, by Theorem 1(c) and (\ref{o3}) we get
$$u_n^*(i,\bm{\lambda})=F_{n}^{k}(i,\bm{\lambda},\bm{\pi}_*)~\forall~(i,\bm{\lambda})\in (S_{n}\setminus D)\times \mathbb{Q}^N,~n\ge 0.$$
Hence, by (\ref{o2}) we have
$$u_n^*(i,\bm{\lambda})={\bf\hat F}_n^{k}(i,\bm{\lambda},\bm{\pi}^{-k})=\sup_{{\pi}^k\in\Pi_m^k}F_{n}^{k}(i,\bm{\lambda},[\bm{\pi}^{-k},{\pi}^k]),$$
which, together with (\ref{o3}), gives (a).

(b) Given $\bm{\pi}=(\pi^1,\pi^2,\ldots,\pi^N)\in\bm{\Pi}_m$ with $\pi^k=\{\phi_n^k,n\ge0\}, n\geq 0,k\in I, \bm{\phi_n}=(\phi_n^k,k\in I)$, we define {\color{black}the} operator $ T_{n,k}^{\bm{\phi}_n}$ on $\mathscr{U}_{n+1}^k$  by
\begin{eqnarray}
 T_{n,k}^{\bm{\phi_n}}u(i,\bm{\lambda)}:= T_{n,k}^{\bm{\phi}_n(\cdot|i,\bm{\lambda})}u(i,\bm{\lambda)}, (i,\bm{\lambda})\in (S_{n}\setminus D)\times \mathbb{Q}^N,  u\in \mathscr{U}_{n+1}^k,\label{T}
 \end{eqnarray}
which, together with   ${\bf\hat F}_{n+1}^{k}(\bm{\pi}^{-k})$, is used to introduce the following notation:
\begin{eqnarray}\label{f_k}
	f_k(\bm{\pi}^{-k}):=\{\{\psi_{n}^k\}\in\Pi_m^k\mid T_{n,k}^{[\bm{\phi}_n^{-k},\psi_{n}^{k}]}{\bf\hat F}_{n+1}^{k}(\bm{\pi}^{-k})=\sup_{\tilde\pi^k\in \bm{\Pi}_m^k}	 T_{n,k}^{[\bm{\phi}_n^{-k},\tilde\pi^k]}{\bf\hat F}_{n+1}^{k}(\bm{\pi}^{-k})\}
\end{eqnarray}
for each $\bm{\pi}^{-k}\in\bm{\Pi}^{-k}_m$.
It is obvious that $f_k(\bm{\pi}^{-k})$ is nonempty by Theorem 2(a). Moreover, since  $\bm{\Pi_m}^k$ is compact,  {by Lemma 1}, $f_k(\bm{\pi}^{-k})$ is compact.
On the other hand, by (\ref{T2}), the function
\begin{eqnarray}
	\pi^k:=\{\psi_n^k\} \longmapsto T_{n,k}^{[\bm{\phi}_n^{-k},\psi_{n}^{k}]}{\bf\hat F}_{n+1}^{k}(\bm{\pi}^{-k})
\end{eqnarray}
is linear and hence convex in $\pi^k=\{\psi_n^k\}\in f_k(\bm{\pi}^{-k})$ for any fixed $\bm{\pi}^{-k}\in \bm{\Pi}_m^{-k}$.

We now define the multifunction
\begin{eqnarray*}
	f:~\times_{k=1}^N\Pi^k_m~&\to&~ 2^{\times_{k=1}^N\Pi^k_m}\\
	\bm{\pi}~&\longmapsto&~f(\bm{\pi}):=\times_{k=1}^N f_k(\bm{\pi}^{-k}).
	\end{eqnarray*}
We next show that mapping $f$ is upper-semicontinuous. Suppose that:
\begin{description}
\item[(P.1)]  Any  $\bm{\pi}_t=(\pi_t^k,k\in I),\bm{\pi}_t^*=(\pi_t^{k*},k\in I)\in\bm{\Pi}_m$, for all $t\ge 0$;
\item[(P.2)]  $\lim_{t\to\infty}\bm{\pi}_t=\bm{\pi}\in\bm{\Pi}_m$;
$\lim_{t\to\infty}\bm{\pi}_t^{*}=\bm{\pi}^{*}\in\bm{\Pi}_m$;
\item[(P.3)] $\bm{\pi}_t^*\in f(\bm{\pi}_t)$ for all  $t\geq 1$.
\end{description}
We wish to show that $\bm{\pi}^{*}$ is in $f(\bm{\pi})$.
To do this, for each  fixed $k\in I$ and any $t\ge 0$, let $\pi_t^k=:\{\phi_{n,t}^k,n\in\mathbb{N}_0\}, \pi_t^{k*}=:\{\psi_{n,t}^{k},n\in\mathbb{N}_0\}$, and $\pi^{k*}=:\{\psi_n^k,n\in\mathbb{N}_0\}$. Then, for any $n\ge 0$ and $(i,\bm{\lambda})\in (S_{n}\setminus D)\times \mathbb{Q}^N, i\in I$, by (P.3) and (\ref{f_k}) we have
\begin{eqnarray}\label{f_nk}
	{\bf\hat{F}}_{n}^{k}(i,\bm{\lambda},\bm{\pi}^{-k}_t)=T_{n,k}^{[\bm{\phi}_{n,t}^{-k},\psi_{n,t}^{k}]}{\bf\hat F}_{n+1}^{k}(i,\bm{\lambda},\bm{\pi}^{-k}_t)~\forall~t,n\ge0.
\end{eqnarray}
Since $F_{n}^{k}(i,\bm{\lambda},\cdot)$ is uniformly continuous on the compact space $\bm{\Pi}_m$ (by Lemma 1),    ${\bf\hat{F}}_{n}^{k}(i,\bm{\lambda},\bm{\pi}^{-k})$ is also continuous in $\bm{\pi}^{-k}\in \bm{\Pi}_m^{-k}$.
Thus, using Proposition C.12 in \cite{Hernandez1996} and the definition of the operators, letting $t\to\infty$ in (\ref{f_nk}) we have
\begin{eqnarray*}\label{thm2.01}
	{\bf\hat{F}}_{n}^{k}(i,\bm{\lambda},\bm{\pi}^{-k})=T_{n,k}^{[\bm{\phi}_{n}^{-k},\psi_{n}^{k}]}{\bf\hat{F}}_{n+1}^{k}(i,\bm{\lambda},\bm{\pi}^{-k})~\forall~n\ge0,
\end{eqnarray*}
which means that $\pi^{k*}\in f_k(\bm{\pi}^{-k})$ for each $k\in I$, and so $\bm{\pi}^{*}$ is in $\times_{k=1}^Nf_k(\pi^{-k})$, that is, $\bm{\pi}^*\in f(\bm{\pi})$. Thus, the mapping $F$ is indeed upper-semicontinuous.

Then, it follows from Fan's fixed point theorem (see \cite{Guo2005})  that there exists $\pi_*^k:=\{\psi^{k}_{n,*}\}\in \Pi_m^k$ such that $\bm{\pi}_{*}=:(\pi_*^k,k\in I)\in \times_{k=1}^N f_k(\bm{\pi_*}^{-k})$. Therefore,  by (\ref{f_k}) we have
  \begin{eqnarray*}\label{thm2.02}
  	{\bf\hat{F}}_{n}^{k}(\bm{\pi_*}^{-k})=T_{n,k}^{[\bm{\psi}_{n}^{-k},\psi_{n}^{k}]}{\bf\hat{F}}_{n+1}^{k}(\bm{\pi_*}^{-k})~\ \ \ \forall n\ge0,k\in I.
  \end{eqnarray*}
By the uniqueness in Theorem 1(c), we have
$F_{n}^{k}(i,\bm{\lambda},[\bm{\pi}_*^{-k},\pi^k_*])={\bf\hat {F}}_{n}^{k}(i,\bm{\lambda},\bm{\pi_*}^{-k})~~\forall k\in I,n\geq 0,$
which, together with Theorem 2(a), proves  (b).

(c) By (b) and Theorem 1, we have
$	F_{0}^{k}(i,\bm{\lambda},[\bm{\pi}_*^{-k},\pi^k_*])={\hat{F}}_{0}^{k}(i,\bm{\lambda},\bm{\pi}_*^{-k})\ge F_{0}^{k}(i,\bm{\lambda},[\bm{\pi}_*^{-k},\pi^k])$ for all $\pi^k\in\Pi^k$,
which implies $\bm{\pi}_*$ is a Nash equilibrium.
\end{proof}

\section{An Algorithm}\label{sec5}
In this section, we will give another main result (see  Theorem \ref{thm3} below) on an algorithm to find $\epsilon$-Nash equilibria.
To do so, we need the following assumption.

{\bf{Assumption B.}} $\beta:=\inf_{i\in S_n, \bm{a}\in\bm{A}_n(i), n\ge 0}p_n(D\mid i,\bm{a})>0$.

Obviously, Assumption B implies Assumption A (by Proposition 1).

\begin{lemma} Under Assumption B, for any $\bm{\pi}=(\pi^k,k\in I)$ with $\pi^k=\{\phi_n^k,n\geq 0\}\in \Pi_m^k,   k\in I$, $n\geq 0$,
we define two sequences $\{u^m_{n,k}(\bm{\pi}),m\geq0\}$ and $\{v^m_{n,k}(\bm{\pi}^{-k}),m\geq0\}$ of functions $u^m_{n,k}(\bm{\pi})$ and $v^m_{n,k}(\bm{\pi}^{-k})$ on $ (S_n\setminus D)\times\mathbb{Q}^N$, respectively,  by
 \begin{eqnarray}
 u_{n,k}^{m+1}(i,\bm{\lambda},\bm{\pi})&:=& T_{n,k}^{\bm{\phi}_n}u_{n+1,k}^{m}(i,\bm{\lambda},\bm{\pi}), \ \ v_{n,k}^{m+1}(i,\bm{\lambda},\bm{\pi}^{-k}):=T_{n,k}^{\bm{\phi}_n^{-k}}v_{n+1,k}^{m}(i,\bm{\lambda},\bm{\pi}^{-k}) \label{E-1}\\
 {\rm with} \ u_{n,k}^{0}(i,\bm{\lambda},\bm{\pi})&=&v_{n,k}^{0}(i,\bm{\lambda},\bm{\pi}^{-k}):=\mathbb{I}_{(-\infty,0]}(\lambda^k)   \ \ \forall  (i,\bm{\lambda})\in (S_n\setminus D)\times \mathbb{Q}^N, n\geq 0.  \ \ \ \ \ \label{E-2}
   \end{eqnarray}
  Then, for all $n\geq 0, k\in I, m\geq 1,(i,\bm{\lambda})\in (S_n\setminus D)\times \mathbb{Q}^N$, we have
\begin{eqnarray*}
		|u_{n,k}^{m}(i,\bm{\lambda},\bm{\pi})-F_{n}^{k}(i,\bm{\lambda},\bm{\pi})| \leq \sum_{t=m}^{+\infty}(1-\beta)^{t}~~and~~	 |v_{n,k}^{m}(i,\bm{\lambda},\bm{\pi}^{-k})-\hat{F}_n^k(i,\bm{\lambda},\bm{\pi}^{-k})|\leq \sum_{t=m}^{+\infty}(1-\beta)^{t}.
	\end{eqnarray*}
\end{lemma}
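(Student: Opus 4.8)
The plan is to treat the two claimed inequalities in a completely parallel way, since both rest on a single observation: under Assumption B the operators appearing in (\ref{E-1}) are contractions of modulus $1-\beta$ in the supremum norm taken over $(S_n\setminus D)\times\mathbb{Q}^N$, while $F_n^k$ and $\hat F_n^k$ are precisely the fixed points of the corresponding recursions. Indeed, by Theorem 1(c) we have $F_n^k=T_{n,k}^{\bm{\phi}_n}F_{n+1}^k$, and by Theorem 2(a) (equation (\ref{optimality})) we have $\hat F_n^k=T_{n,k}^{\bm{\phi}_n^{-k}}\hat F_{n+1}^k$. I would introduce the sup-distances $d_n^m:=\sup_{(i,\bm{\lambda})\in(S_n\setminus D)\times\mathbb{Q}^N}|u_{n,k}^m(i,\bm{\lambda},\bm{\pi})-F_n^k(i,\bm{\lambda},\bm{\pi})|$ and, analogously, $\tilde d_n^m$ for the pair $(v_{n,k}^m,\hat F_n^k)$, and show that each satisfies a one-step recursive bound that telescopes down to $m=0$.

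The key step is the contraction estimate for the basic operator $T_{n,k}^{\bm{a}}$. For any $u,\tilde u\in\mathscr{U}_{n+1}^k$ the term $\mathbb{I}_{[\lambda^k,\infty)}(r_n^k(i,\bm{a}))p_n(D\mid i,\bm{a})$ in (\ref{T1}) does not depend on the function argument and hence cancels in the difference, leaving
\begin{equation*}
|T_{n,k}^{\bm{a}}u(i,\bm{\lambda})-T_{n,k}^{\bm{a}}\tilde u(i,\bm{\lambda})|\le \sum_{j\in S_{n+1}\setminus D}|u-\tilde u|(j,\bm{\lambda}-\bm{r}_n(i,\bm{a}))\,p_n(j\mid i,\bm{a}).
\end{equation*}
Bounding $|u-\tilde u|$ by its supremum and using $\sum_{j\in S_{n+1}\setminus D}p_n(j\mid i,\bm{a})=1-p_n(D\mid i,\bm{a})\le 1-\beta$ (this is exactly where Assumption B, the \emph{uniform} lower bound on $p_n(D\mid\cdot)$, is needed) gives a factor $1-\beta$. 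Averaging over actions with the product kernel $\bm{\phi}_n$ preserves the bound, so $T_{n,k}^{\bm{\phi}_n}$ is a $(1-\beta)$-contraction; for the $v$-recursion the extra supremum over $\phi\in\mathscr{P}(A_n^k(i))$ is absorbed through the elementary inequality $|\sup_\phi g(\phi)-\sup_\phi h(\phi)|\le\sup_\phi|g(\phi)-h(\phi)|$, so that $T_{n,k}^{\bm{\phi}_n^{-k}}$ is a $(1-\beta)$-contraction as well.

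Applying these contraction estimates to the recursions (\ref{E-1}) for $u$ and $v$ against the fixed-point identities for $F$ and $\hat F$ yields, for every $n\ge0$ and $m\ge1$,
\begin{equation*}
d_n^m\le(1-\beta)\,d_{n+1}^{m-1}\qquad\text{and}\qquad \tilde d_n^m\le(1-\beta)\,\tilde d_{n+1}^{m-1}.
\end{equation*}
Iterating $m$ times shifts the time index forward by $m$ and accumulates a factor $(1-\beta)^m$, so $d_n^m\le(1-\beta)^m d_{n+m}^0$ and similarly for $\tilde d$. Since $u_{n,k}^0=v_{n,k}^0=\mathbb{I}_{(-\infty,0]}(\lambda^k)$ and $F_n^k,\hat F_n^k$ are probabilities, all of these functions take values in $[0,1]$, whence $d_{n+m}^0\le1$ and $\tilde d_{n+m}^0\le1$. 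Therefore $d_n^m\le(1-\beta)^m$ and $\tilde d_n^m\le(1-\beta)^m$, and since $0<\beta\le1$ these are dominated by the tail sum $\sum_{t=m}^{\infty}(1-\beta)^t$, which is the asserted bound.

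The argument is essentially mechanical once the contraction is identified, so I do not expect a genuine obstacle; the two points that require care are (i) recognizing that the $D$-absorption term is independent of the iterate and therefore cancels, which is what localizes the contraction to the transition mass on $S_{n+1}\setminus D$, and (ii) the bookkeeping of the double index $(n,m)$, since each application of the operator advances $n$ by one while reducing $m$ by one, so the telescoping terminates at the trivial bound $d^0\le1$ at time $n+m$ rather than at time $n$. A minor alternative, matching the stated tail-sum form more literally, would be to prove the Cauchy-type estimate $\sup_{(i,\bm{\lambda})}|u_{n,k}^{m+1}-u_{n,k}^{m}|\le(1-\beta)^m$ by the same contraction and then sum the telescoping series $\sum_{t\ge m}(1-\beta)^t$, invoking the uniqueness in Theorem 1(c)/2(a) to identify the limits with $F$ and $\hat F$; but the direct comparison above is shorter and already yields the (stronger) geometric bound.
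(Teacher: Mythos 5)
Your proof is correct, and its engine is exactly the paper's: the one-step estimate $\|T_{n,k}^{\bm{\phi}_n}u-T_{n,k}^{\bm{\phi}_n}\tilde u\|\le(1-\beta)\|u-\tilde u\|$ (the $D$-absorption term cancels and $\sum_{j\in S_{n+1}\setminus D}p_n(j\mid i,\bm{a})\le 1-\beta$ under Assumption B), applied against the fixed-point identities $F_n^k=T_{n,k}^{\bm{\phi}_n}F_{n+1}^k$ from Theorem 1(c) and $\hat F_n^k=T_{n,k}^{\bm{\phi}_n^{-k}}\hat F_{n+1}^k$ from Theorem 2(a), is literally the first display of the paper's proof, and your treatment of the $v$-half via $|\sup_\phi g-\sup_\phi h|\le\sup_\phi|g-h|$ matches the paper's ``similarly proved'' step. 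Where you genuinely differ is in how the contraction is iterated to a final bound. The paper keeps $m$ fixed and recurses in $n$: it first derives the Cauchy-type estimate $\|u_{n,k}^{m+1}-u_{n,k}^{m}\|\le(1-\beta)^m$, then uses the triangle inequality to get $\|u_{n,k}^{m}-F_n^k\|\le(1-\beta)^m+(1-\beta)\|u_{n+1,k}^{m}-F_{n+1}^k\|$ and sums the resulting geometric series in $n$, landing exactly on the tail sum $\sum_{t=m}^{\infty}(1-\beta)^t$. You instead iterate the contraction simultaneously in both indices, $d_n^m\le(1-\beta)d_{n+1}^{m-1}\le\cdots\le(1-\beta)^m d_{n+m}^0\le(1-\beta)^m$, terminating at the trivial bound at stage $(n+m,0)$; this is shorter and yields the strictly sharper bound $(1-\beta)^m\le(1-\beta)^m/\beta=\sum_{t=m}^{\infty}(1-\beta)^t$, so the lemma follows a fortiori (and the tail-sum form needed later in Theorem 3 is still available). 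Amusingly, the ``minor alternative'' you sketch at the end is, in substance, the paper's actual proof, so you identified both routes and chose the cleaner one; your bookkeeping remark that each operator application advances $n$ while decreasing $m$ is precisely the point that makes the double-index iteration terminate correctly.
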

\begin{proof} For any $m\geq 1$, by Theorem \ref{thm1}(c) and  the definition of  $u_{n,k}^{m+1}(\bm{\pi})$ above, we have
		\begin{eqnarray*}
		&&|u_{n,k}^{m+1}(i,\bm{\lambda},\bm{\pi})-F_n^k(i,\bm{\lambda},\bm{\pi})|\\
        &=&|{T}_{n,k}^{\bm{\phi}_n}u_{n+1,k}^m(i,\bm{\lambda},\bm{\pi})-T_{n,k}^{\bm{\phi}_n}F_{n+1}^k(i,\bm{\lambda},\bm{\pi})|\\
		&=&\bigg|\sum_{a^1\in A_n^1(i)}\ldots\sum_{a^N\in A_n^N(i)}\bigg[\sum_{j\in S_{n+1}\setminus D}\left( u_{n+1,k}^{m}(j,\bm{\lambda}-\bm{r}_n(i,\bm{a})),\bm{\pi})- F_{n+1}^k(j,\bm{\lambda}-\bm{r}_n(i,\bm{a})),\bm{\pi})\right)\\
		&&\times p_n(j\mid i,\bm{a})\bigg]\phi_{n,1}(a^1\mid i,\bm{\lambda})\ldots\phi_{n,N}(a^N\mid i,\bm{\lambda})\bigg|\\
		&\leq&(1-\beta)\| u_{n+1,k}^{m}(\bm{\pi})-F_{n+1}^{k}(\bm{\pi})\| \ \ \  \forall (i,\bm{\lambda})\in (S_n\setminus D)\times \mathbb{Q}^N,
	\end{eqnarray*}
	where the $\|h\|$ denotes the  sup norm of a bounded function $h$.  \\
Also,  by recursion we get
	\begin{eqnarray*}
	\|u_{n,k}^{m+1}(\bm{\pi})-u_{n,k}^m(\bm{\pi})\|
&\leq&(1-\beta)	\| u_{n+1,k}^{m}(\bm{\pi})- u_{n+1,k}^{m-1}(\bm{\pi})\| \\
&\leq&\cdots
		\leq(1-\beta)^m\|u_{n+l,k}^{1}(\bm{\pi})-u_{n+l,k}^{0}(\bm{\pi})\|\leq(1-\beta)^m.
	\end{eqnarray*}
	Moreover, we have
	\begin{eqnarray*}
		\|u_{n,k}^{m}(\bm{\pi})-F_{n}^{k}(\bm{\pi})\|
		&\leq& \|u_{n,k}^{m}(\bm{\pi})-u_{n,k}^{m+1}(\bm{\pi})\|+\|u_{n,k}^{m+1}(\bm{\pi})-F_{n}^{k}(\bm{\pi})\|\\
		&\leq& (1-\beta)^m+(1-\beta)\|u_{n+1,k}^{m}(\bm{\pi})-F_{n+1}^{k}(\bm{\pi})\|\leq\cdots\cdots\cdots\leq \sum_{t=m}^{+\infty}(1-\beta)^{t}.
	\end{eqnarray*}
	Due to $\hat{F}_n^k(i,\bm{\lambda},\bm{\pi}^{-k})=\sup_{\phi\in\mathscr{P}(A_n^k(i))}
T_{n,k}^{[\bm{\phi}_n^{-k},\phi]}\hat{F}_{n+1}^k(i,\bm{\lambda},\bm{\pi}^{-k})$ (by Theorem 2(a)) and\\
$v_{n,k}^{m+1}(i,\bm{\lambda},\bm{\pi}^{-k})=\sup_{\phi\in\mathscr{P}(A_n^k(i))}T_{n,k}^{[\bm{\phi}_n^{-k},\phi]}v_{n+1,k}^{m}(i,\bm{\lambda},\bm{\pi}^{-k})$ (by the definition of $v_{n,k}^{m+1}(\bm{\pi}^{-k})$),  we can similarly prove
$
	\|v_{n,k}^{m}(\bm{\pi}^{-k})-\hat{F}_n^{k}(\bm{\pi}^{-k})\|	\leq \sum_{t=m}^{+\infty}(1-\beta)^{t}$,
which gives the results.
\end{proof}

Next we give a definition of the $\epsilon$-Nash equilibrium as follows.
\begin{definition}
	For any given $\epsilon>0$,  $\bm{\pi}\in\bm{\Pi}$ is called an $\epsilon$-Nash equilibrium for the nonstationary nonzero-sum stochastic game,  if
	\begin{eqnarray*}
		F^k(i,\bm{\lambda},\bm{\pi})\geq\sup_{\pi^k\in\Pi^k}F^k(i,\bm{\lambda},\bm{\pi})-\epsilon~~\forall(i,\bm{\lambda})\in S_0\times \mathbb{Q}^N~ and~k\in I.
	\end{eqnarray*}
\end{definition}

To obtain  $\epsilon$-Nash equilibria, we provide {\color{black}the} algorithm below.
\begin{table}[H]
	\centering
	{\footnotesize
		\begin{tabular}{l}
			\toprule
			{\bf{Algorithm A.}} The algorithm to find an $\epsilon$-Nash equilibrium. \\
			\midrule
			{\bf{Step-I. Parameters}}  \\
			Accuracy parameter $\epsilon$, number of players $N$, period length $T(\epsilon):=[log_{(1-\beta)}^{(\frac{\epsilon}{5}\beta)}]^++1$, \\
			action set  $A_n^k$ for player $k\in I=\{1,\ldots,N\}$,  divide $[0,1]$ into $K=[\frac{10T(\epsilon)N\prod_{k\in I}(\max_{n\leq T(\epsilon)}|A_n^k|)}{\epsilon}]^++1$\\ {\color{black}equal segments}
			($|A_n^k|$ denotes the number of elements in $A_n^k$),
			$\delta:=\frac{1}{K}$,
			$L:=\{0, \delta, 2\delta,\ldots, K\delta\}$.\\
			{\bf{Step-II. Initialization}}  \\
			$\bullet$ For each  $m\ge0,$ $n\in\{0,1,\cdots,T(\epsilon)\}$ and $(i,\bm{\lambda})\in (S_n\setminus D)\times\mathbb{Q}^N$, \\choose the probability measures $\phi_{n,k}^{(m)}(\cdot\mid i,\bm{\lambda})=:(p_k^1,\ldots,p_k^{|A_n^k(i)|})$  such that   $p_k^t\in L$ and $\sum_{t=0}^{|A_n^k(i)|}p_k^t=1$.\\
			For each $n>T(\epsilon)$ and $(i,\bm{\lambda})\in (S_n\setminus D)\times\mathbb{Q}^N$, take $\phi_{n,k}^{(m)}(\cdot\mid i,\bm{\lambda})$ as any probability measures on $A_n^k(i)$.
			\\Construct a Markov policy $\pi^k_{(m)}=\{\phi_{n,k}^{(m)},n\geq 0\}$ for player $k$.\\ Let $\bm{\phi}_n^{(m)}:=(\phi_{n,1}^{(m)}, \phi_{n,2}^{(m)}, \ldots, \phi_{n,N}^{(m)})$ on $(S_n\setminus D)\times\mathbb{Q}^N$, and $\bm{\pi}_{(m)}:=\{\pi^1_{(m)},\pi^2_{(m)},\ldots,\pi^N_{(m)}\}$, set $m=0$.\\
			{\bf{Step-III. For $k=1,2,\ldots,N$ do.}}  \\
			{\bf{(a).}}\\
			{{\underline{Initialization}}}  \\
			$\bullet$ For each $(i,\bm{\lambda})\in (S_{T(\epsilon)}\setminus D)\times \mathbb{Q}^N$, let $u_{T(\epsilon),k}^{0}(i,\bm{\lambda},\bm{\pi}_{(m)})=\mathbb{I}_{(-\infty,0]}(\lambda^k)$, and take $l=1$.\\
			{\underline{Iteration}}  \\
			$\bullet$ Calculate the function $u_{T(\epsilon)-l,k}^{l}(i,\bm{\lambda},\bm{\pi}_{(m)})$ on $(S_{T(\epsilon)-l}\setminus D)\times \mathbb{Q}^N$, by \\
			\  \ \ \  \  \ \ \ \  \  \ \ \ \  \ \ \ \ \  \  \( u_{T(\epsilon)-l,k}^{l}(i,\bm{\lambda},\bm{\pi}_{(m)})=T_{T(\epsilon)-l,k}^{\bm{\phi}_{T(\epsilon)-l}^{(m)}}u_{T(\epsilon)-l+1,k}^{l-1}(i,\bm{\lambda},\bm{\pi}_{(m)}) \) \\
			{\underline{Stopping Rule}}  \\
			$\bullet$
			If $l=T(\epsilon)$,
			stop and we obtain $u_{0,k}^{T(\epsilon)}(i,\bm{\lambda},\bm{\pi}_{(m)})$.\\
		{\bf{(b).}}\\
		{{\underline{Initialization}}}  \\
		$\bullet$ For each $(i,\bm{\lambda})\in (S_{T(\epsilon)}\setminus D)\times \mathbb{Q}^N$, let $v_{T(\epsilon),k}^{0}(i,\bm{\lambda},\bm{\pi}_{(m)}^{-k})=\mathbb{I}_{(-\infty,0]}(\lambda^k)$, and take $l=1$.\\
		{\underline{Iteration}}  \\
		$\bullet$ Calculate the function $v_{T(\epsilon)-l,k}^{l}(i,\bm{\lambda},\bm{\pi}_{(m)}^{-k})$ on $(S_{T(\epsilon)-l}\setminus D)\times \mathbb{Q}^N$, by \\
		\  \ \ \  \  \ \ \ \  \  \ \ \ \  \ \ \ \ \  \  \( v_{T(\epsilon)-l,k}^{l}(i,\bm{\lambda},\bm{\pi}_{(m)}^{-k})=T_{T(\epsilon)-l,k}^{(\bm{\phi}_{T(\epsilon)-l}^{(m)})^{-k}}v_{T(\epsilon)-l+1,k}^{l-1}(i,\bm{\lambda},\bm{\pi}_{(m)}^{-k}) \) \\
		{\underline{Stopping Rule}}  \\
		$\bullet$
		If $l=T(\epsilon)$,
		stop and we obtain $v_{0,k}^{T(\epsilon)}(i,\bm{\lambda},\bm{\pi}_{(m)}^{-k})$.\\
			{\bf{(c).}}\\
			{\underline{Comparison}}  \\
			$\bullet$ For each $(i,\bm{\lambda})\in (S_0\setminus D)\times \mathbb{Q}^N$. If\\
			\ \ \  \  \ \ \ \  \  \ \ \ \  \ \ \ \ \  \  \(
			|u_{0,k}^{T(\epsilon)}(i,\bm{\lambda},\bm{\pi}_{(m)})- v_{0,k}^{T(\epsilon)}(i,\bm{\lambda},\bm{\pi}_{(m)}^{-k})|<\frac{3\epsilon}{5}\)~~~~~~({\bf{c-1*}})\\
			then go on, otherwise, return to {\bf{(B)} }, let $m=m+1$ and reselect the  $\bm{\pi}_{(m)}$.\\
			{\bf{end for.}} \\
			{\bf{Step-IV. Stopping Rule}}  \\
			$\bullet$  If for each $k\in\{1,2,\ldots,N\}$, {\bf{(c-1*)}} holds, take $G(\epsilon):=m$, then $\bm{\pi}_{(G(\epsilon))}$  is an $\epsilon$-Nash equilibrium.\\
			\bottomrule
		\end{tabular}
	}
\end{table}

In Theorem 3, we prove that  {\color{black}the} $\epsilon$-Nash equilibrium  can be efficiently found by using Algorithm 1.
\begin{theorem}\label{thm3} Under Assumption B, given any $\epsilon>0$, there exists a finite $G(\epsilon):=\prod_{n=0}^{T(\epsilon)}(K+1)^{\prod_{k\in I}|A_n^k(i)|}$ such that the policy $\bm{\pi}_{(G(\epsilon))}$ from Algorithm 1  is an $\epsilon$-Nash equilibrium.
\end{theorem}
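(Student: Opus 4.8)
The plan is to split the argument into three pieces: a uniform truncation estimate showing that the quantities $u_{0,k}^{T(\epsilon)}$ and $v_{0,k}^{T(\epsilon)}$ computed by Algorithm~1 approximate the true criterion $F_0^k$ and the best-response value $\hat{F}_0^k$ to within $\epsilon/5$; a finiteness-plus-existence argument guaranteeing that the enumeration over grid policies halts at some index $m\le G(\epsilon)$ with the comparison test \textbf{(c-1*)} satisfied for every $k$; and a final error accounting showing that any multipolicy passing \textbf{(c-1*)} for all players is an $\epsilon$-Nash equilibrium.

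First I would establish the truncation bound. The choice $T(\epsilon)=[\log_{(1-\beta)}(\tfrac{\epsilon}{5}\beta)]^{+}+1$ forces $(1-\beta)^{T(\epsilon)}\le\tfrac{\epsilon}{5}\beta$, so Lemma~2 yields, uniformly in $\bm{\pi}$,
\[
|u_{0,k}^{T(\epsilon)}(i,\bm{\lambda},\bm{\pi})-F_0^k(i,\bm{\lambda},\bm{\pi})|\le\sum_{t=T(\epsilon)}^{\infty}(1-\beta)^t=\frac{(1-\beta)^{T(\epsilon)}}{\beta}\le\frac{\epsilon}{5},
\]
and likewise $|v_{0,k}^{T(\epsilon)}(\bm{\pi}^{-k})-\hat{F}_0^k(\bm{\pi}^{-k})|\le\epsilon/5$. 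By construction both quantities depend on $\bm{\pi}$ only through $\bm{\phi}_0,\ldots,\bm{\phi}_{T(\epsilon)-1}$, i.e.\ on the finite-horizon part of the policy. Granting these bounds, the last piece is immediate: since $\hat{F}_0^k(\bm{\pi}^{-k})\ge F_0^k(\bm{\pi})$ always, if \textbf{(c-1*)} holds for player $k$ then
\[
0\le\hat{F}_0^k(\bm{\pi}^{-k})-F_0^k(\bm{\pi})\le|\hat{F}_0^k-v_{0,k}^{T(\epsilon)}|+|v_{0,k}^{T(\epsilon)}-u_{0,k}^{T(\epsilon)}|+|u_{0,k}^{T(\epsilon)}-F_0^k|<\frac{\epsilon}{5}+\frac{3\epsilon}{5}+\frac{\epsilon}{5}=\epsilon,
\]
which is exactly the defining inequality of an $\epsilon$-Nash equilibrium at time $0$.

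The core is the second piece. For termination I would observe that a grid policy is determined by its kernels on $\{0,\ldots,T(\epsilon)\}$, each a tuple with entries in $L$, so the number of distinct grid configurations is the finite quantity $G(\epsilon)=\prod_{n=0}^{T(\epsilon)}(K+1)^{\prod_{k\in I}|A_n^k(i)|}$, and the enumeration exhausts the grid in at most $G(\epsilon)$ steps. For the existence of a \emph{passing} policy I would invoke Theorem~2 to obtain a Markov Nash equilibrium $\bm{\pi}_*$, at which $F_0^k(\bm{\pi}_*)=\hat{F}_0^k(\bm{\pi}_*^{-k})$ for each $k$; together with the truncation bounds this gives $|u_{0,k}^{T(\epsilon)}(\bm{\pi}_*)-v_{0,k}^{T(\epsilon)}(\bm{\pi}_*^{-k})|\le 2\epsilon/5$. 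I would then round $\bm{\pi}_*$ to the nearest grid policy $\bm{\pi}_{(m)}$ on $\{0,\ldots,T(\epsilon)\}$ and control the resulting movement of the truncated quantities: since each $T_{n,k}^{\bm{\phi}}$ is linear in every marginal $\phi^l$ and the kernels $T_{n,k}^{\bm{a}}$ take values in $[0,1]$, perturbing the kernels by at most $\delta=1/K$ per action component propagates, over the $T(\epsilon)$ backward steps and $N$ players, to a total change bounded by roughly $T(\epsilon)\,N\,\delta\,\prod_{k\in I}|A_n^k|$. The definition of $K$ is calibrated so that this is strictly below $\epsilon/10$, whence $|u_{0,k}^{T(\epsilon)}(\bm{\pi}_{(m)})-v_{0,k}^{T(\epsilon)}(\bm{\pi}_{(m)}^{-k})|\le 2\epsilon/5+2\cdot\epsilon/10<3\epsilon/5$, so $\bm{\pi}_{(m)}$ passes \textbf{(c-1*)} for every $k$.

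The main obstacle is precisely this last quantitative step. Lemma~1 provides only qualitative uniform continuity, so I must supply an explicit Lipschitz modulus for the finite-horizon map $\bm{\phi}\mapsto u_{0,k}^{T(\epsilon)}(\bm{\phi})$ (and its best-response analogue $v$) in terms of the grid spacing, and verify that the prescribed $K$ drives the discretization error strictly below $\epsilon/10$ on each side; the telescoping total-variation estimate across players and across the $T(\epsilon)$ iterates is where the bookkeeping must be done carefully. Once this propagation-of-perturbation bound is in place, the three pieces assemble directly: the algorithm halts at some $m\le G(\epsilon)$ with \textbf{(c-1*)} holding for all $k$, and the returned multipolicy $\bm{\pi}_{(G(\epsilon))}$ is an $\epsilon$-Nash equilibrium.
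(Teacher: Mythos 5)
Your proposal is correct and follows essentially the same route as the paper's proof: the same $\epsilon/5$ truncation bounds from Lemma~2 and the choice of $T(\epsilon)$, the same triangle-inequality argument showing that passing \textbf{(c-1*)} implies the $\epsilon$-Nash property, and the same existence argument that rounds the Markov Nash equilibrium from Theorem~2 to a grid policy and controls the propagation of the $\delta$-perturbation through the $T(\epsilon)$ backward iterations by the bound $T(\epsilon)N\delta\prod_{k\in I}(\max_{n\le T(\epsilon)}|A_n^k|)<\epsilon/10$. The ``telescoping total-variation estimate across players'' you flag as the remaining bookkeeping is precisely the induction (A3) carried out in the paper, with the identical constants and the identical counting argument for the finiteness of $G(\epsilon)$.
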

\begin{proof} (1) We first prove that if  an $\bm{\pi}_{(G(\epsilon))}=\{\pi^1_{(G(\epsilon))},\pi^2_{(G(\epsilon))},\ldots,\pi^N_{(G(\epsilon))}\}$  satisfies {\bf{(c-1*)}}, then $\bm{\pi}_{(G(\epsilon))}$ is an $\epsilon$-Nash equilibrium .
	
	By Lemma 2 and $T(\epsilon):=[log_{(1-\beta)}^{(\frac{\epsilon}{5}\beta)}]^++1$,  a simple calculation gives
		\begin{eqnarray}	&&|F_0^k(i,\bm{\lambda},\bm{\pi}_{(G(\epsilon))})-u_{0,k}^{T(\epsilon)}(i,\bm{\lambda},\bm{\pi}_{(G(\epsilon))})|\leq \sum_{t=T(\epsilon)}^{+\infty}(1-\beta)^{t}<\frac{\epsilon}{5},\label{C-1}\\
		&&	|\hat{F}_0^k(i,\bm{\lambda},\bm{\pi}^{-k}_{(G(\epsilon))})-v_{0,k}^{T(\epsilon)}(i,\bm{\lambda},\bm{\pi}_{(G(\epsilon))}^{-k})|\leq \sum_{t=T(\epsilon)}^{+\infty}(1-\beta)^{t}<\frac{\epsilon}{5}.\label{C-2}
	\end{eqnarray}
Thus, using {\bf{(c-1*)}} we have
\begin{eqnarray*}
	&&|F_0^k(i,\bm{\lambda},\bm{\pi}_{(G(\epsilon))})-\hat{F}_0^k(i,\bm{\lambda},\bm{\pi}^{-k}_{(G(\epsilon))})|\\
	&\leq&
	|F_0^k(i,\bm{\lambda},\bm{\pi}_{(G(\epsilon))})-u_{0,k}^{T(\epsilon)}(i,\bm{\lambda},\bm{\pi}_{(G(\epsilon))})|
	+	|u_{0,k}^{T(\epsilon)}(i,\bm{\lambda},\bm{\pi}_{(G(\epsilon))})- v_{0,k}^{T(\epsilon)}(i,\bm{\lambda},\bm{\pi}_{(G(\epsilon))}^{-k})|\\
	&&+|\hat{F}_0^k(i,\bm{\lambda},\bm{\pi}^{-k}_{(G(\epsilon))})-v_{0,k}^{T(\epsilon)}(i,\bm{\lambda},\bm{\pi}_{(G(\epsilon))}^{-k})|\\
	&\leq&\epsilon,
	\end{eqnarray*}
which means that $\bm{\pi}_{(G(\epsilon))}$ is an $\epsilon$-Nash equilibrium.

	(2) Next, we will prove that there exists a finite $G(\epsilon)$ satisfying {\bf{(c-1*)}}, i.e.,
	\begin{eqnarray}\label{c-2}
		|u_{0,k}^{T(\epsilon)}(i,\bm{\lambda},\bm{\pi}_{(G(\epsilon))})- v_{0,k}^{T(\epsilon)}(i,\bm{\lambda},\bm{\pi}_{(G(\epsilon))})|<\frac{3\epsilon}{5}.
	\end{eqnarray}
Take  the Nash equilibrium  $\bm{\pi}_*=(\pi^{1}_*,\pi^{2}_*,\cdots,\pi^{N}_*)$ from Theorem \ref{thm2},  where $\pi^{k}_*:=\{\psi^k_{n,*}, n\ge0\}$, let $\bm{\psi}_n^*:=(\psi^k_{n,*})_{k\in I}$.  Given any $\bm{\pi}=(\pi^k,k\in I)$ with $\pi^k=\{\phi_{n,k},n\ge0\}$, by induction we next show the following fact: If
\begin{eqnarray}\label{W-1}
\max_{k\in I}\rho^k(\psi^k_{n,*}(\cdot|i,\bm{\lambda}), \phi_{n,k}(\cdot|i,\bm{\lambda}))<\delta \ \ \  \forall n\in\{0,1,\cdots,T(\epsilon)\}, (i,\bm{\lambda})\in (S_n\setminus D)\times \mathbb{Q}^N,
\end{eqnarray}
 where  $\rho^k(\phi, \phi'):=\max_{a\in A_n^k(i)}|\phi(a)-\phi'(a)|$, then, for each $k\in I, (i,\bm{\lambda})\in (S_n\setminus D)\times \mathbb{Q}^N$,
\begin{eqnarray}\label{A3}
	|u_{T(\epsilon)-l,k}^{l}(i,\bm{\lambda},\bm{\pi}_*)-u_{T(\epsilon)-l,k}^{l}(i,\bm{\lambda},\bm{\pi})|\leq lN\delta \prod_{k\in I}(\max_{n\leq T(\epsilon)}|A_n^k|)~~\forall~0\leq l\leq T(\epsilon).
\end{eqnarray}
Indeed, (\ref{A3})  is true for $l=0$ (by (\ref{E-2})). We now assume that it holds for some $0\leq l\leq T(\epsilon)$.
Then, for $l+1\leq T(\epsilon)$, let $T(\epsilon)-l-1=:n$. By  the induction hypothesis and (\ref{E-1}), we have
\begin{eqnarray*}
	&&|u_{n,k}^{l+1}(i,\bm{\lambda},\bm{\pi}_*)-u_{n,k}^{l+1}(i,\bm{\lambda},\bm{\pi})|\\
	&=&	|T_{n,k}^{\bm{\psi}_n^*}u_{n+1,k}^{l}(i,\bm{\lambda},\bm{\pi}_*)-T_{n,k}^{\bm{\phi}_n}u_{n+1,k}^{l}(i,\bm{\lambda},\bm{\pi})|\\
	&=&
	\bigg|\sum_{a^1\in A_n^1(i)}\ldots\sum_{a^N\in A_n^N(i)}\bigg[\mathbb{I}_{[\lambda^k,+\infty)}(r_n^k(i,\bm{a}))
	p_n(D\mid i,\bm{a})\nonumber\\
	&&~~~+\sum_{j\in S_{n+1}\setminus D}u_{n+1,k}^{l}(j,\bm{\lambda}-\bm{r}_n(i,\bm{a}),\bm{\pi}_*)p_n(j\mid i,\bm{a})\bigg]\psi_{n,*}^1(a^1\mid i,\bm{\lambda} )\ldots\psi_{n,*}^N(a^N\mid i,\bm{\lambda})\\
	&&-\sum_{a^1\in A_n^1(i)}\ldots\sum_{a^N\in A_n^N(i)}\bigg[\mathbb{I}_{[\lambda^k,+\infty)}(r_n^k(i,\bm{a}))
	p_n(D\mid i,\bm{a})\nonumber\\
	&&~~~+\sum_{j\in S_{n+1}\setminus D}u_{n+1,k}^{l}(j,\bm{\lambda}-\bm{r}_n(i,\bm{a}),\bm{\pi})p_n(j\mid i,\bm{a})\bigg]\phi_{n,1}(a^1\mid i,\bm{\lambda} )\ldots\phi_{n,N}(a^N\mid i,\bm{\lambda}) \bigg|\\
    &=&\bigg|	\sum_{a^1\in A_n^1(i)}\ldots\sum_{a^N\in A_n^N(i)}\\
	&&\bigg[\sum_{j\in S_{n+1}\setminus D}[u_{n+1,k}^{l}(j,\bm{\lambda}-\bm{r}_n(i,\bm{a}),\bm{\pi}_*)-u_{n+1,k}^{l}(j,\bm{\lambda}-\bm{r}_n(i,\bm{a}),\bm{\pi})]p_n(j\mid i,\bm{a})\bigg]\\
	&&\times\psi_{n,*}^1(a^1\mid i,\bm{\lambda} )\ldots\psi_{n,*}^N(a^N\mid i,\bm{\lambda})\\
	&&+ \bigg|\sum_{a^1\in A_n^1(i)}\ldots\sum_{a^N\in A_n^N(i)}\\
	&&\bigg[\mathbb{I}_{[\lambda^k,+\infty)}(r_n^k(i,\bm{a}))
	p_n(D\mid i,\bm{a})+\sum_{j\in S_{n+1}\setminus D}u_{n+1,k}^{l}(j,\bm{\lambda}-\bm{r}_n(i,\bm{a}),\bm{\pi})p_n(j\mid i,\bm{a})\bigg]\\
	&&\times(\psi_{n,*}^1(a^1\mid i,\bm{\lambda} )\ldots\psi_{n,*}^N(a^N\mid i,\bm{\lambda})-
	\phi_{n,1}(a^1\mid i,\bm{\lambda} )\ldots\phi_{n,N}(a^N\mid i,\bm{\lambda}))  \bigg|\\
	&\leq&lN\delta \prod_{k\in I}(\max_{n\leq T(\epsilon)}|A_n^k|)\\
	&&+\left|\sum_{a^1\in A_n^1(i)}\ldots\sum_{a^N\in A_n^N(i)}(\psi_{n,*}^1(a^1\mid i,\bm{\lambda} )\ldots\psi_{n,*}^N(a^N\mid i,\bm{\lambda})-
	\phi_{n,1}(a^1\mid i,\bm{\lambda} )\ldots\phi_{n,N}(a^N\mid i,\bm{\lambda}))\right|\\
	&\leq& lN\delta \prod_{k\in I}(\max_{n\leq T(\epsilon)}|A_n^k|)+N\delta\prod_{k\in I} (\max_{n\leq T(\epsilon)}|A_n^k|)=(l+1)N\delta \prod_{k\in I}(\max_{n\leq T(\epsilon)}|A_n^k|),
\end{eqnarray*}
where the first part of the first inequality  is by induction hypothesis, and  the second inequality  is due to the following  fact:
\begin{eqnarray*}
	&&\sum_{a^1\in A_n^1(i)}\ldots\sum_{a^N\in A_n^N(i)}(\psi_{n,*}^1(a^1\mid i,\bm{\lambda} )\ldots\psi_{n,*}^N(a^N\mid i,\bm{\lambda})-
\phi_{n,1}(a^1\mid i,\bm{\lambda} )\ldots\phi_{n,N}(a^N\mid i,\bm{\lambda}))\\
&\leq& N\delta\prod_{k\in I} (\max_{n\leq T(\epsilon)}|A_n^k|),
\end{eqnarray*}
which is proved as follows:
\begin{eqnarray*}
	&&\left|\sum_{a^1\in A_n^1(i)}\ldots\sum_{a^N\in A_n^N(i)}(\psi_{n,*}^1(a^1\mid i,\bm{\lambda} )\ldots\psi_{n,*}^N(a^N|i,\bm{\lambda})-
	\phi_{n,1}(a^1|i,\bm{\lambda} )\ldots\phi_{n,N}(a^N|i,\bm{\lambda}))\right|\\
	&\leq& \prod_{k\in I}(\max_{n\leq T(\epsilon)}|A_n^k|) \max_{\bf{a}\in A_n}\big|\psi_{n,*}^1(a^1|i,\bm{\lambda} )\ldots\psi_{n,*}^N(a^N|i,\bm{\lambda})-
	\phi_{n,1}(a^1|i,\bm{\lambda} )\ldots\phi_{n,N}(a^N|i,\bm{\lambda})\big|\\
	&=&\prod_{k\in I}(\max_{n\leq T(\epsilon)}|A_n^k|) \max_{\bf{a}\in A_n}\\
	&& \{\big|\psi_{n,*}^1(a^1|i,\bm{\lambda} )\psi_{n,*}^2(a^2|i,\bm{\lambda} )\ldots\psi_{n,*}^N(a^N|i,\bm{\lambda})-\phi_{n,1}(a^1|i,\bm{\lambda} )\psi_{n,*}^2(a^2| i,\bm{\lambda} )\ldots\psi_{n,*}^N(a^N|i,\bm{\lambda})\\
	&&+ \phi_{n,1}(a^1|i,\bm{\lambda} )\psi_{n,*}^2(a^2|i,\bm{\lambda} )\ldots\psi_{n,*}^N(a^N|i,\bm{\lambda})-\phi_{n,1}(a^1|i,\bm{\lambda} )\phi_{n,2}(a^2|i,\bm{\lambda} )\ldots\phi_{n,N}(a^N|i,\bm{\lambda})\big|\}\\
	&\leq&\prod_{k\in I}(\max_{n\leq T(\epsilon)}|A_n^k|) \max_{\bf{a}\in A_n}\{\big|\psi_{n,*}^1(a^1|i,\bm{\lambda} )-\phi_{n,1}(a^1|i,\bm{\lambda} )\big|\times\big|\psi_{n,*}^2(a^2|i,\bm{\lambda} )\ldots\psi_{n,*}^N(a^N|i,\bm{\lambda})\big|\\
	&&+ \big|\phi_{n,1}(a^1|i,\bm{\lambda} )\big|\times\big|\psi_{n,*}^2(a^2|i,\bm{\lambda} )\ldots\psi_{n,*}^N(a^N|i,\bm{\lambda})-\phi_{n,2}(a^2|i,\bm{\lambda} )\ldots\phi_{n,N}(a^N|i,\bm{\lambda} )\big|\}\\
	&\leq&\prod_{k\in I}(\max_{n\leq T(\epsilon)}|A_n^k|)(\delta+ \max_{\bm{a}\in \bm{A}_n}\big|\psi_{n,*}^2(a^2|i,\bm{\lambda} )\ldots\psi_{n,*}^N(a^N|i,\bm{\lambda}) )-\phi_{n,2}(a^2|i,\bm{\lambda} )\ldots\phi_{n,N}(a^N|i,\bm{\lambda} )\big|)\\
	&\leq &  \prod_{k\in I}(\max_{n\leq T(\epsilon)}|A_n^k|)N\delta.
\end{eqnarray*}
Thus we have proved (\ref{A3}), and (by  the parameters in Algorithm 1) obtain that
\begin{eqnarray}\label{A4}
	|u_{0,k}^{T(\epsilon)}(i,\bm{\lambda},\bm{\pi}_*)-u_{0,k}^{T(\epsilon)}(i,\bm{\lambda},\bm{\pi})|
	\leq T(\epsilon)N\delta \prod_{k\in I}(\max_{n\leq T(\epsilon)}|A_n^k|)<\frac{\epsilon}{10}.
\end{eqnarray}
Following the arguments of (\ref{A4}), by (\ref{E-1}) we  can also prove that
\begin{eqnarray}\label{A5}
	|v_{0,k}^{T(\epsilon)}(i,\bm{\lambda},\bm{\pi}_*^{-k})-v_{0,k}^{T(\epsilon)}(i,\bm{\lambda},\bm{\pi}^{-k})|<\frac{\epsilon}{10}~~\forall~(i,\bm{\lambda})\in (S_n\setminus D)\times \mathbb{Q}^N,k\in I.
\end{eqnarray}
Then it can be gained that
\begin{eqnarray*}
	u_{0,k}^{T(\epsilon)}(i,\bm{\lambda},\bm{\pi})&>&u_{0,k}^{T(\epsilon)}(i,\bm{\lambda},\bm{\pi}_*)-\frac{\epsilon}{10}~~~~~~~~~~~~~~~~~~~~~~(by ~(\ref{A4}))\\
	&>&F_0^k(i,\bm{\lambda},\bm{\pi}_*)-\frac{\epsilon}{5}-\frac{\epsilon}{10}~~~~~~~~~~~~~~~~~~~(by ~(\ref{C-1}))\\
	&=&\hat{F}_0^k(i,\bm{\lambda},\bm{\pi}_*^{-k})-\frac{3\epsilon}{10}~~~~~~~~~~~~~~~~~~(by~ Theorem~2)\\
	&>&v_{0,k}^{T(\epsilon)}(i,\bm{\lambda},\bm{\pi}_*^{-k})-\frac{3\epsilon}{10}-\frac{\epsilon}{5}~~~~~~~~~~~~~~~(by ~(\ref{C-2}))\\
	&>&v_{0,k}^{T(\epsilon)}(i,\bm{\lambda},\bm{\pi}^{-k})-\frac{3\epsilon}{5}.~~~~~~~~~~~~~~~~~~~~~(by ~(\ref{A5}))
\end{eqnarray*}
Thus, for obtaining (\ref{c-2}), we  need to prove the existence of a finite integer $G(\epsilon)$ such that {\color{black}(\ref{W-1}) holds}.
For the Nash equilibrium $\bm{\pi}_*=\{\pi^{k}_*\}_{k\in I}$ with $\pi^{k}_*=\{\psi^k_{n,*}, n\ge0\}$, for any $k\in I, 0\leq n\leq T(\epsilon), (i,\bm{\lambda})\in (S_n\setminus D)\times\mathbb{Q}^N$,   let $\psi^k_{n,*}(\cdot| i,\bm{\lambda})=:(p^{*1},p^{*2},\ldots,p^{*|A_n^k(i)|})$.
Then for each $p^{*t} (\in[0,1])$, we can choose an $ p^t\in L=\{0, \delta, 2\delta,\ldots, K\delta\}$ such that  $|p^{*t}-p^t|<\delta ~\forall ~t\in\{1,2,\ldots,|A_n^k(i)|\}$ and $\sum_{t=1}^{|A_n^k(i)|}p^t=1$. Let $\phi_{n,k}^{(G(\epsilon))}(\cdot|i,\bm{\lambda}):=(p^1,p^2,\ldots,p^{|A_n^k(i)|})$. Then,  we have $\max_{k\in I}\rho^k(\psi^k_{n,*}(\cdot|i,\bm{\lambda}), \phi_{n,k}^{(G(\epsilon))}(\cdot|i,\bm{\lambda}))=\max_{k\in I}\max_{t\in \{1,...,|A_n^k(i)|\}}|p^{*t}-p^t|<\delta$.
Since $p^t\in \{0, \delta, 2\delta,\ldots, K\delta\}$, the number of {\color{black}choices of} ``$p^t$" is at most $K+1$. Thus, the number of constructing $\phi_{n,k}^{(G(\epsilon))}(\cdot| i,\bm{\lambda})$ is at most $(K+1)^{|A_n^k(i)|}$. Therefore, the {\color{black}number}  of {\color{black}choices of}  $\bm{\phi}_n^{(G(\epsilon))}(\cdot|i,\bm{\lambda}):=(\phi_{n,1}^{(G(\epsilon))}(\cdot|i,\bm{\lambda}), \phi_{n,2}^{(G(\epsilon))}(\cdot| i,\bm{\lambda}), \ldots, \phi_{n,N}^{(G(\epsilon))}(\cdot|i,\bm{\lambda}))$ is  at most $(K+1)^{\prod_{k\in I}|A_n^k(i)|}$. This means that the number $m$ of {\color{black} selections} $\pi(m)$ in Algorithm 1 is finite, that is, $G(\epsilon)=\prod_{n=0}^{T(\epsilon)}(K+1)^{\prod_{k\in I}|A_n^k(i)|}$ is finite.

\end{proof}

\section{Examples}\label{sec5}
In this section we illustrate the applications of our main results with an energy management model and take a numerical experiment.

 \begin{example}(A nonstationary energy management system): \label{ex1}
 	\\
 	In the considered system, without loss of generality, suppose that there are three players. One of them,  called Player 1, who
 is equipped with a wind turbine,  a solar panel and the concentrating solar power(CSP), owns 
 	{\color{black}some renewable energy sources}, and can both consume energy as well as provide energy to other two customers (say, Player 2 and Player 3).
 	
 	To build the model of  the system, we use notation $i_n^k,M_n^k, a_n^k$, $\xi_n^k$, $\eta····························_n^k$, where
 \begin{itemize}	
 \item[$M_n^k$:] denotes the maximum storage capacity of Player $k=1,2,3$ during period $n\ge0$;
 \item[$i_n^k$:] denotes the energy storage level for Player $k$ at the beginning of period $n\ge0$;
 \item[$a_n^{1,k}$:] is the amount of energy that Player 1 may sell to Player $k=2,3$ during  period $n\ge0$;
 \item[$a_n^k$:] is the  energy demanded by Player $k$ from Player 1 during  period $n\ge0$, for $k=2,3$;
 \item[$z_n^k$:] denotes the energy consumed minus the energy harvested by the wine turbine, the solar panel and the CSP, respectively, for $k=1,2,3$;
\item[$\xi_n^1$:]  denotes the actual consumed/generated energy of Player 1 during period $n$ satisfying $\xi_n^1=z_n^1+z_n^2+z_n^3$, which is the energy consumed minus the energy harvested (from renewable resources) and assumed to have a distribution $P(\xi_n^1=m)=:q_n^1(m)$ with $m=0,\pm 1,\ldots,\pm M_n^1$;
\item[$\xi_n^k$:] denotes the energy consumed of Player $k$ during period $n$ and  is assumed to have a distribution  $P(\xi_n^k=m)=:q_n^k(m)$ with $m=0,1,2,\ldots$ for $k=2,3$;
\item[$\eta_n^k$:]  denotes the additional energy bought from the utility company (rather than player 1) by Player $k$ during period $n$ and is assumed to have a distribution  $P(\eta_n^k=m)=:g_n^k(m)$ with $m=0,\ldots,M_n^k$ for $k=2,3$.
 \end{itemize}
 	 Since the actual trading energy between Player 1 and Player $k\in\{2,3\}$ at period $n$ is $\min\{a_n^{1,k},a_n^k\}$, using the standard notation $x^+:=\max\{0,x\}$, we can obtain that the storage level of Player $k$ evolves according to the following equations
 	 \begin{eqnarray}
 	 	i_{n+1}^1&:=&\min\{M_{n+1}^1,(i_n^1-\min\{a_n^{1,2},a_n^2\}-\min\{a_n^{1,3},a_n^3\}-\xi_n^1)^+\},\\
 	 	i_{n+1}^k&:=&\min\{M_{n+1}^k,(i_n^k+\min\{a_n^{1,k},a_n^k\}-\xi_n^k+\eta_n^k)^+\}, \ \  k=2,3,n\ge0,
 	 \end{eqnarray}
 given initial stock levels $i_0^k(k=1,2,3)$.
 	
 	 Thus, for each $n\ge0$, the state space  during period $n$ is $S_n:=\prod\limits_{k=1}^3\{0,\ldots,M_n^k\}$. For each state $(i_1,i_2,i_3)\in S_n$, from the statements of the system above,the available action sets may be taken as follows: $A_n^1(i_1,i_2,i_3)\subseteq\{(b_1,b_2):b_1\in \{0,\ldots,i_1\},b_2\in\{0,\ldots,i_1-b_1\}\}$, $A_n^2(i_1,i_2,i_3)\subseteq\{0,\ldots,M_n^2-i_2\}, A_n^3(i_1,i_2,i_3)\subseteq\{0,\ldots,M_n^3-i_3\}$.

  Suppose  that $\{\xi_n^k\}(k=1,2,3)$ and $\{\eta_n^k\}(k=2,3)$ are independent. Then, for $(i_1,i_2,i_3)\in S_n$,  $a^1:=(a^{1,2},a^{1,3})\in A^1_n(i_1,i_2,i_3), a^2\in A^2_n(i_1,i_2,i_3),a^3\in A^3_n(i_1,i_2,i_3),$  and $(j_1,j_2,j_3)\in S_{n+1}$, the transition laws $p_n$ are given by
 	 \begin{eqnarray*}
 	 	&&p_n((j_1,j_2,j_3)|(i_1,i_2,i_3),(a^{1},a^{2},a^3))\\
 	 	&=&\sum_{m=-M_{n}^1}^{M_n^1}\mathbb{I}_{\{j_1\}}(\min\{M_{n+1}^1,(i_1-\min\{a^{1,2},a^2\}-\min\{a^{1,3},a^3\}-m)^+)q_n^1(m)\\
 	 	&&\times \sum_{m=-M_n^2}^{+\infty} \mathbb{I}_{\{j_2\}}(\min\{M_{n+1}^2,(i_2+\min\{a^{1,2},a^2\}-m)^+\})\hat q_n^{2}(m)\\
 	 	&&\times \sum_{m=-M_n^3}^{+\infty} \mathbb{I}_{\{j_3\}}(\min\{M_{n+1}^3,(i_3+\min\{a^{1,3},a^3\}-m)^+\})\hat q_n^{3}(m),
 	 	\end{eqnarray*}
  	where for $k=2,3$,
  	\begin{equation}
  		\hat q_n^{k}(m):=P(\xi_n^k-\eta_n^k=m)=
  		\left\{
  		\begin{array}{lr}
  		\sum_{l=0}^{M_n^k}q_n^k(m+l)g_n^k(l),m=0,1,2\ldots\\
  		\sum_{l=|m|}^{M_n^k}q_n^k(l+m)g_n^k(l),m=-1,\ldots,-M_n^k.
  			\end{array}
  		\right.
  	\end{equation}
  Finally, to complete the description of the control model (1) for this energy management system, we shall consider  reward functions $r_n^k$ on the corresponding $\mathbb{K}_n$ during period $n$ for Players $k$, which are assumed to be nonnegative-valued.
 	
 	 We aim to ensure the existence of a Nash equilibrium with the maximum security probability before each player' energy in stock becomes 0. To achieve the goal, in spirit of Assumption A, we consider the following condition:

Condition C: $\sum_{n=0}^\infty \beta_n=\infty$, where $\beta_n:=q_n^1(M_n^1)\sum_{m\ge M_n^2}^{+\infty}\hat q_n^{2}(m)\sum_{m\ge M_n^3}^{+\infty}\hat q_n^{3}(m)$.
\end{example}
Condition C can ensure the existence of a Nash equilibrium. Before proving this assertion, we provide some sufficient conditions for Condition C below.
\begin{proposition}
	For Example 1 with finite capacities $M_*^k$ ($i.e., M_n^k\leq M_*^k<\infty$ for all $n\ge 0$), one of the following hypotheses (a)-(b) implies  Condition C.
	\\(a) The case of producing and selling of Player 1 priority to trades to Players 2 and 3:
	\begin{itemize}
		\item[($a_1$)]$\sum_{n\ge0}q_n^1(M_n^1)=\infty.$ \ \ \ \ (Specially, if there exists a constant $\delta\in(0,1)$ such that $q_n^1(M_n^1)\ge \frac{\delta}{n+M_n^1}$ for all $n\ge 0$, the the hypothesis ($a_1$) is satisfied.)
		\item[($a_2$)] $\inf_{n\ge0}\sum_{m\ge M_n^k}^{+\infty}\hat q_n^{k}(m)>0$ for $k=2,3$. \ \ \ \ (Specially, if $\{\xi_n^k\}$ and $\{\eta_n^k\}$ have common distributions $q^k(\cdot)$ and $g^k(\cdot)$, respectively, and $\inf_{n\ge0}\sum_{m\ge M_*^k}^{+\infty}\sum_{l=0}^{M_n^k}q^k(m+l)g^k(l)>0$, then the hypothesis ($a_2$) is satisfied.)
	\end{itemize}
	(b) The case of ordering and selling of Players 2 and 3 priority to trades to Player 1:
	\begin{itemize}
		\item[($b_1$)]$\inf_{n\ge0}q_n^1(M_n^1)>0.$ \ \ \ \  (Specially, if $\{\xi_n^1\}$ have a common distribution $q^1(\cdot)$ and $q^1(M_n^1)>0$, the the hypothesis ($b_1$) is satisfied.)
		\item[($b_2$)] $\sum_{n\ge0}\sum_{m\ge M_n^k}^{+\infty}\hat q_n^{k}(m)=\infty$ for $k=2,3.$ \ \ \ \  (Specially, if there exists a constant $\delta_2^k\in(0,1)$ such that $q_n^k(m)\ge \frac{\delta_2^k}{(n+m)(n+m+1)}$ for all $n,m\ge0$ and $\inf_{n\ge0,o\leq m\leq M_n^k}g_n^k(m):=\delta_3^k>0$, then the hypothesis ($b_2$) is satisfied.)
	\end{itemize}
\end{proposition}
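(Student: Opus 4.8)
The plan is to exploit the fully factored form $\beta_n=q_n^1(M_n^1)\,T_n^2\,T_n^3$, where for $k=2,3$ I write $T_n^k:=\sum_{m\ge M_n^k}\hat q_n^k(m)=P(\xi_n^k-\eta_n^k\ge M_n^k)\in[0,1]$. In each case the idea is the same: bound two of the three nonnegative factors below by strictly positive constants and thereby reduce $\sum_n\beta_n=\infty$ to the divergence of a single series furnished by the hypotheses. Two elementary facts will be used repeatedly: the telescoping identity $\sum_{p\ge N}\frac1{(n+p)(n+p+1)}=\frac1{n+N}$, and the capacity bound $M_n^k\le M_*^k$, which lets me replace $n+M_n^k$ by $n+M_*^k$ in every denominator below.

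For case (a), hypothesis $(a_2)$ gives $c_k:=\inf_{n\ge0}T_n^k>0$ for $k=2,3$, so $\beta_n\ge c_2c_3\,q_n^1(M_n^1)$ and therefore $\sum_n\beta_n\ge c_2c_3\sum_n q_n^1(M_n^1)=\infty$ by $(a_1)$. The two parenthetical sufficient conditions are then immediate: $q_n^1(M_n^1)\ge\frac{\delta}{n+M_n^1}\ge\frac{\delta}{n+M_*^1}$ gives a divergent harmonic-type series, verifying $(a_1)$; and since $M_n^k\le M_*^k$ one has $T_n^k\ge\sum_{m\ge M_*^k}\hat q_n^k(m)=\sum_{m\ge M_*^k}\sum_{l=0}^{M_n^k}q^k(m+l)g^k(l)$, whose infimum over $n$ is assumed positive, verifying $(a_2)$. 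This case presents no real difficulty.

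For case (b), hypothesis $(b_1)$ gives $c_1:=\inf_{n\ge0}q_n^1(M_n^1)>0$, so $\beta_n\ge c_1\,T_n^2T_n^3$, and the claim reduces to $\sum_n T_n^2T_n^3=\infty$ under $(b_2)$. The parenthetical conditions feed this reduction: when $\xi_n^1$ has a common distribution $q^1$, the values $q^1(M_n^1)$ range over the finite positive set $\{q^1(j):j\le M_*^1\}$, so $c_1>0$; and under the special $(b_2)$ estimate, substituting $q_n^k(m)\ge\frac{\delta_2^k}{(n+m)(n+m+1)}$ and $g_n^k(l)\ge\delta_3^k$ into $T_n^k=\sum_{l=0}^{M_n^k}g_n^k(l)\,P(\xi_n^k\ge M_n^k+l)$ and telescoping yields $T_n^k\ge\delta_2^k\delta_3^k\sum_{l=0}^{M_n^k}\frac1{n+M_n^k+l}\ge\frac{\delta_2^k\delta_3^k}{n+M_*^k}$, whence $\sum_n T_n^k=\infty$, i.e.\ $(b_2)$ holds.

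The step I expect to be the main obstacle is the last one in case (b): deducing $\sum_n T_n^2T_n^3=\infty$ from $\sum_n T_n^2=\sum_n T_n^3=\infty$. Termwise divergence of two series does not force divergence of their product (for instance, tails supported on disjoint blocks of indices), so the delicate point is to produce a divergent lower bound for the product $T_n^2T_n^3$ itself rather than for each factor in isolation; in particular the individual estimates $T_n^k\ge\delta_2^k\delta_3^k/(n+M_*^k)$ only yield $T_n^2T_n^3=O(n^{-2})$, which is summable. I would therefore treat this product estimate as the crux, and would look either for uniform positivity of one of the tails (reducing (b) to the mechanism of (a)) or for a jointly quantitative strengthening of $(b_2)$ that controls the two tail sequences simultaneously.
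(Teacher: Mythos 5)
Your case (a) is correct and is essentially the paper's own argument: under $(a_2)$ the two tail factors are bounded below by positive constants, so $\beta_n\ge \delta_2\delta_3\,q_n^1(M_n^1)$ and $\sum_n\beta_n=\infty$ by $(a_1)$. (In fact the paper only writes this out for the parenthetical special cases, so your version, which proves the general hypotheses $(a_1)$--$(a_2)$ and then verifies the parentheticals, is if anything more complete; the mechanism is identical.)

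For case (b), the obstacle you isolated is not a weakness of your proposal --- it is a genuine gap in the paper. Writing $T_n^k:=\sum_{m\ge M_n^k}\hat q_n^k(m)$ as you do, the paper's entire proof of (b) consists of verifying that the parenthetical hypotheses imply $(b_2)$, i.e.\ $\sum_n T_n^k=\infty$ for $k=2,3$, after which it simply declares ``Hence, (b2) is true, and thus Condition C is satisfied.'' The passage from $(b_1)$ and $(b_2)$ to $\sum_n q_n^1(M_n^1)T_n^2T_n^3=\infty$ --- exactly the product-divergence step you refused to gloss over --- is never argued, and indeed it cannot be: your $O(n^{-2})$ estimate is sharp. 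Concretely, take $M_n^k\equiv M\ge1$, $q_n^1(M)=\tfrac12$ for all $n$ (so $(b_1)$ holds, even in its special form), $g_n^k$ uniform on $\{0,\dots,M\}$, and for $k=2,3$
\begin{equation*}
q_n^k(m)=\frac{\delta}{(n+m)(n+m+1)}\ \ (m\ge1),\qquad q_n^k(0)=1-\frac{\delta}{n+1},
\end{equation*}
which satisfies the quantitative bound of the special $(b_2)$ wherever that bound is defined. Telescoping gives $P(\xi_n^k\ge M+l)=\delta/(n+M+l)$, hence $T_n^k\le\delta/(n+M)$ while still $T_n^k\ge\delta/(n+2M)$, so $(b_2)$ holds but $\beta_n\le\delta^2/(n+M)^2$ is summable and Condition C fails. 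So part (b) of the proposition is false as stated, and a correct version requires precisely one of the repairs you propose: uniform positivity of at least one of the two tail sequences (reducing (b) to the mechanism of (a)), or a joint hypothesis that directly forces $\sum_n T_n^2T_n^3=\infty$.
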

\begin{proof}
	(a) Obviously, it suffices to verify the special case. In fact, for the special case, we have $\sum_{n\ge0}q_n^1(M_n^1)\ge \sum_{n\ge0}\frac{\delta}{n+M_n^1}\ge  \sum_{n\ge0}\frac{\delta}{n+M_*^1}$ and $\delta_k:=\inf_{n\ge0}\sum_{m\ge M_k}^{+\infty}\sum_{l=0}^{M_n^k}q^k(m+l)g^k(l)>0$ for $k=2,3$ and $n\ge0$, which gives $\sum_{n=0}^{+\infty}\beta_n\ge\sum_{n=0}^{+\infty}\frac{\delta\delta_2\delta_3}{n+M_*^1}=\infty$.
	\\(b) It suffices to verify (b2) for the special case. Indeed, for the  case we have for $k=2,3$,
	\begin{eqnarray*}
		\sum_{n\ge0}\sum_{m\ge M_n^k}^{+\infty}\hat q_n^{k}(m)&=&\sum_{n\ge0}\sum_{m\ge M_n^k}^{+\infty}\sum_{l=0}^{M_n^k}q_n^k(m+l)g_n^k(l)\\
		&\ge&\sum_{n\ge0}\sum_{m\ge M_n^k}^{+\infty}\sum_{l=0}^{M_n^k}\frac{\delta_2^k\delta_3^k}{(n+m+k)(n+m+k+1)}\\
		&\ge&\sum_{n\ge0}\sum_{m\ge M_n^k}^{+\infty}\frac{\delta_2^k\delta_3^k}{(n+m)(n+m+1)}=\sum_{n\ge0}\frac{\delta_2^k\delta_3^k}{n+M_n^k}=\infty.
	\end{eqnarray*}
	Hence, (b2) is true, and thus Condition C is satisfied.
\end{proof}

\begin{proposition}
Under the Condition C, there exists a Nash equilibrium for the nonstationary energy management system above.
\end{proposition}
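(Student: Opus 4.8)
The plan is to reduce the assertion to the general existence result Theorem~2 by verifying that Condition~C forces Assumption~A to hold, using Proposition~1 as the bridge. Here the target set is $D=\{(0,0,0)\}$, the single state in which all three players simultaneously exhaust their stored energy, and $\tau_D$ is the first passage time into it. Since Theorem~2 guarantees a Nash equilibrium whenever Assumption~A is in force, it suffices to produce a sequence $\{\beta_n\}\subseteq[0,1)$ with $p_n(D\mid i,\bm{a})\ge\beta_n$ for every $n\ge0$, every $i=(i_1,i_2,i_3)\in S_n\setminus D$, and every admissible $\bm{a}=(a^1,a^2,a^3)\in\bm{A}_n(i)$, such that $\sum_n\beta_n=\infty$. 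I would take exactly $\beta_n=q_n^1(M_n^1)\sum_{m\ge M_n^2}\hat q_n^2(m)\sum_{m\ge M_n^3}\hat q_n^3(m)$, the quantity appearing in Condition~C.

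The heart of the argument is a one-step estimate showing that a single extremal realization of the noises drives all three coordinates to $0$ irrespective of the actions chosen. I would isolate the event $\{\xi_n^1=M_n^1\}\cap\{\xi_n^2-\eta_n^2\ge M_n^2\}\cap\{\xi_n^3-\eta_n^3\ge M_n^3\}$ and check that on it the next state is $(0,0,0)$. For Player~1, since $i_1\le M_n^1$ and the traded amounts $\min\{a^{1,k},a^k\}$ are nonnegative, one has $i_1-\min\{a^{1,2},a^2\}-\min\{a^{1,3},a^3\}-\xi_n^1\le M_n^1-M_n^1=0$, so the cap-and-positive-part recursion forces $i_{n+1}^1=0$. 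For $k=2,3$, the admissibility constraint $A_n^k(i)\subseteq\{0,\ldots,M_n^k-i_k\}$ gives $\min\{a^{1,k},a^k\}\le a^k\le M_n^k-i_k$, whence $i_k+\min\{a^{1,k},a^k\}\le M_n^k\le\xi_n^k-\eta_n^k$ on the event, so again $i_{n+1}^k=0$. Invoking the assumed independence of $\{\xi_n^k\}_{k=1,2,3}$ and $\{\eta_n^k\}_{k=2,3}$, the probability of this event factorizes into $\beta_n$, yielding $p_n(D\mid i,\bm{a})\ge\beta_n$ uniformly over $i\in S_n\setminus D$ and $\bm{a}\in\bm{A}_n(i)$.

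With the one-step lower bound established, the conclusion follows directly: Condition~C is precisely $\sum_n\beta_n=\infty$, so (after the harmless separate treatment of any index at which $\beta_n=1$, where absorption into $D$ is immediate) Proposition~1 applies and delivers Assumption~A, and Theorem~2 then produces the desired Nash equilibrium for the energy management system. I expect the only genuine obstacle to be the coordinate-wise verification that the extremal noise event sends every admissible state--action pair into $D$, that is, correctly exploiting the caps $M_{n+1}^k$, the positive parts, and especially the action constraints $a^k\le M_n^k-i_k$ to bound $i_k+\min\{a^{1,k},a^k\}$ by $M_n^k$; once this monotonicity and boundedness bookkeeping is settled, the remainder is a routine appeal to the already-proved Proposition~1 and Theorem~2.
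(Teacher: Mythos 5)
Your proposal is correct and takes essentially the same approach as the paper: both arguments establish the uniform one-step bound $p_n(D\mid i,\bm{a})\ge\beta_n$ (the paper by directly bounding the indicator-sum formula for the transition law, you by isolating the extremal noise event $\{\xi_n^1=M_n^1\}\cap\{\xi_n^2-\eta_n^2\ge M_n^2\}\cap\{\xi_n^3-\eta_n^3\ge M_n^3\}$, which is the same estimate in probabilistic language), and then conclude via Proposition~1 and Theorem~2. The coordinate-wise facts you single out, namely $i_1\le M_n^1$ together with nonnegativity of the traded amounts for Player 1, and the admissibility constraint $a^k\le M_n^k-i_k$ for Players 2 and 3, are exactly the inequalities the paper uses to justify its lower bound.
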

\begin{proof} In fact, let $D=:\{0\}\times\{0\}\times\{0\}$ for each $n\ge 0$, $(i_1,i_2,i_3)\in (S_n\setminus D)$ and $a^1:=(a^{1,2},a^{1,3})\in A^1_n(i_1,i_2,i_3),
a^2\in A^2_n(i_1,i_2,i_3),a^3\in A^3_n(i_1,i_2,i_3),$ we have
\begin{eqnarray*}
&&p_n(\{0\}\times\{0\}\times\{0\}|a^1,a^2,a^3)\\
&=&\sum_{m=-M_n^1}^{M_n^1}\mathbb{I}_{\{0\}}(\min\{M_{n+1}^1,(i_1-\min\{a^{1,2},a^{2}\}-\min\{a^{1,3},a^3\}-m)^+)q_n^1(m)\\
&&\times \sum_{m=-M_n^2}^{+\infty} \mathbb{I}_{\{0\}}(\min\{M_{n+1}^2,(i_2+\min\{a^{1,2},a^2\}-m)^+\})\hat q_n^{2}(m)\\
&&\times \sum_{m=-M_n^3}^{+\infty} \mathbb{I}_{\{0\}}(\min\{M_{n+1}^3,(i_3+\min\{a^{1,3},a^3\}-m)^+\})\hat q_n^{3}(m)\\
&=&\sum_{m=i_1-\min\{a^{1,2},a^{2}\}-\min\{a^{1,3},a^3\}}^{M_n^1}q_n^1(m)
\times \sum_{m=i_2+\min\{a^{1,2},a^{2}\}}^{+\infty}\hat q_n^{2}(m)
\times \sum_{m=i_3+\min\{a^{1,3},a^3\}}^{+\infty}\hat q_n^{3}(m)\\\
&\ge&q_n^1(M_n^1)\sum_{m\ge M_n^2}^{+\infty}\hat q_n^{2}(k)\sum_{m\ge M_n^3}^{+\infty}\hat q_n^{3}(k)=\beta_n,
\end{eqnarray*}
which, together with the Condition C and  Proposition 1, implies that Assumption A holds. Hence, the conclusion follows from  Theorem 2.
\end{proof}
\begin{remark}\label{rem3}
For Example 1 with a finite capacity $M^*$, suppose that $q_n(k)$ satisfies $q_n(k)\geq \frac{\delta
}{(n+k)(n+k+1)}$ for all $k\ge M^*$ and $n\geq 0$, with some constant  $\delta\in(0,1)$. Then, we have  $\beta_n\ge\sum_{k=M^*}^{\infty} q_n(k)\geq \frac{\delta}{n+M^*}$ for all $n\geq 0$. Thus, the Condition C (hence, Assumption A) is satisfied. However, if  taking $q_n(k)=\frac{\delta
}{(n+k)(n+k+1)}$ for all $k\ge M^*$ and $n\geq 0$, then we let $\beta_n:=\sum_{k=M^*}^{\infty}  q_n(k)=\frac{\delta}{n+M^*}$ and so $\inf_{n\geq 0} \beta_n=0$, which implies that both the Condition 1 in \cite{Huang2020} fail to hold. Moreover, if $\sum_{k=M^*}^{\infty} q_n(k)\geq \beta$ for all $n\geq 0$, with some positive constant $\beta$, then Assumption A is satisfied.
\end{remark}
{\color{black} \begin{example}(A numerical experiment) Consider the safety problem of two insurance companies (we say ``A" and ``B") in two states of boom ``1'' and slump  ``2" in financial markets. Both companies can choose to invest or not invest in the boom state ($``a_{11}\& a_{12}"$ for company A, $``b_{11}\& b_{12}"$ for company B,), and choose to exit the market in the slump state ($``a_{21}"$ for A, $``b_{21}"$ for  B). Suppose that the actions of both affect their respective rewards. Both {companies} want to maximize the probability that accumulative rewards will exceed their profit goal before financial markets enter a slump.
 	 For the numerical calculation, we take {some specific data}: $S_n\equiv \{1,2\}$, $D=\{2\}$, $A_n^1(1)\equiv \{a_{11},a_{12}\}$,
 	$A_n^1(2)\equiv \{a_{21}\}$,
 	 $A_n^2(1)\equiv \{b_{11},b_{12}\}$
 	 $A_n^2(2)\equiv \{b_{21}\}$,
 	 $p_n(1\mid 1, a_{11},b_{11})\equiv 0.55$,
 	  $p_n(1\mid 1, a_{11},b_{12})\equiv 0.60$,
 	   $p_n(1\mid 1, a_{12},b_{11})\equiv 0.45$,
 	    $p_n(1\mid 1, a_{12},b_{12})\equiv 0.60$ for all $n\ge 0$.
 	   Moreover, we suppose that \\
 	   $
 	   r_0^1(1,a,b)=\left\{
 	   \begin{aligned}
 	   &1&& if \quad (a,b)=(a_{11},b_{11}),\\	   &0&& if \quad (a,b)=(a_{11},b_{12}),\\
 	   &0&& if \quad (a,b)=(a_{12},b_{11}),\\	   &1&& if \quad (a,b)=(a_{12},b_{12}),\\
 	   \end{aligned}
 	   \right.
 	   $
 	     $
 	   r_0^2(1,a,b)=\left\{
 	   \begin{aligned}
 	   	&1&& if \quad (a,b)=(a_{11},b_{11}),\\	   &0&& if \quad (a,b)=(a_{11},b_{12}),\\
 	   	&1&& if \quad (a,b)=(a_{12},b_{11}),\\	   &1&& if \quad (a,b)=(a_{12},b_{12}),\\
 	   \end{aligned}
 	   \right.
 	   $\\
 	    $
 	   r_n^1(1,a,b)=\left\{
 	   \begin{aligned}
 	   	&1&& if \quad (a,b)=(a_{11},b_{11}),\\	   &1&& if \quad (a,b)=(a_{11},b_{12}),\\
 	   	&0&& if \quad (a,b)=(a_{12},b_{11}),\\	   &0&& if \quad (a,b)=(a_{12},b_{12}),\\
 	   \end{aligned}
 	   \right.
 	   $
 	   $
 	   r_n^2(1,a,b)=\left\{
 	   \begin{aligned}
 	   	&0&& if \quad (a,b)=(a_{11},b_{11}),\\	   &1&& if \quad (a,b)=(a_{11},b_{12}),\\
 	   	&0&& if \quad (a,b)=(a_{12},b_{11}),\\	   &0&& if \quad (a,b)=(a_{12},b_{12}),\\
 	   \end{aligned}
 	   \right.
 	   $ \\for all $n\ge 1$ and $\bm{\lambda}_0=\{2,3\}$.  Obviously, Assumption B holds with $\beta=0.4$.
 	   
 	  { Concerning the numerical calculation of the example, we denote the $\epsilon$-equilibrium (claculated by Algorithm A) by $\bm{\pi}_{\epsilon}:= \{\pi^1_{\epsilon},\pi^2_{\epsilon},\ldots,\pi^N_{\epsilon}\}$ with $\pi^k_{\epsilon}=\{\phi_n^k,n\geq 0\}~ \forall k\in I$. 
    Note that the model  considered in this paper is nonstationary. Therefore, the $\epsilon$-equilibrium is time-dependent Markov multipolicy,  which will lead to a significant  computational load.
    However, by Algorithm A, we only need to calculate $\phi_n^k(\cdot\mid i,\bm{\lambda})\in \mathscr{P}(A_n^k(i))$ for  $n\in\{0,1,\cdots,T(\epsilon)\}$. For each $n>T(\epsilon)$ , $\phi_n^k(\cdot\mid i,\bm{\lambda})$ can be  taken any value. It indicates that  the $\epsilon$-equilibrium (obtained by Algorithm A) is a truncated multipolicy, which only requires calculating the first $T(\epsilon)$ time periods instead of traversing all $n\ge0$.
    
 Then from Step-I of Algorithm A we have Table 1, which shows the   the information regarding the period length $T(\epsilon)$ to $\epsilon$  in Table 1.
Moreover, by Theorem 3, we obtain Figure 1, which 
    gives  the dependence of the number of iterations $G(\epsilon)$  required to calculate the $\epsilon$-equilibrium on $\epsilon$ in Algorithm A.}
    	    \begin{table}[H]
    	\newcommand{\tabincell}[2]{\begin{tabular}{@{}#1@{}}#2\end{tabular}}
    	\centering
    	\caption{The period length $T(\epsilon)$ under different errors $\epsilon$}
    	\begin{tabular}{c cccccccccc}
    		\hline\hline\noalign{\smallskip}
    		$\epsilon$ & 0.1& 0.2 & 0.3&0.4 & 0.5&0.6& 0.7&0.8 & 0.9&1.0  \\
    		\hline
    		$T(\epsilon)$ 	& 10& 9 & 8&7 & 7&6& 6&6 & 6&5 
    		\\
    		\noalign{\smallskip}\hline
    	\end{tabular}
    \end{table}

 	   \begin{figure}[H]
 	   	\begin{minipage}[t]
 	   		{1\linewidth} \centering \includegraphics[scale=0.3]{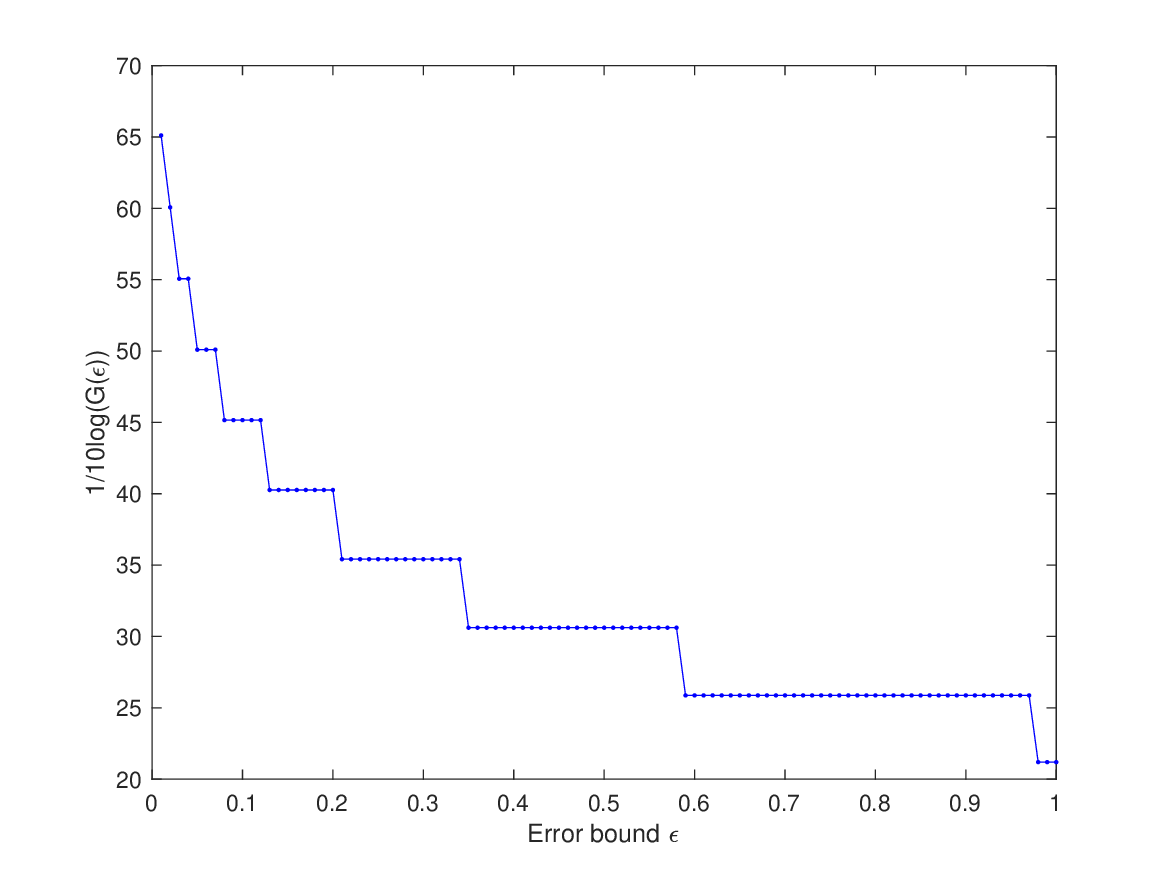}
 	   		\caption{The number of iterations of Algorithm A with respect to $\epsilon$.}
 	   		\label{f1.eps}
 	   	\end{minipage}
 	   \end{figure}
 	  
 	    Furthermore, without loss of generality, taking $\epsilon=0.1$, we can calculate the period length $T(\epsilon)=10$. Moreover,  the existence  of  $0.1$-Nash equilibrium can be obtained by Theorem \ref{thm3} with {in a finite number of iterations}.
 	   Based on the experimental results, for the $0.1$-Nash equilibrium,  we give the several calculated values of $\phi_n^1(a|1,\lambda^1)$ and $\phi_n^2(b|1, \lambda^2)$ in Table 2.
 	\end{example}

 \begin{table}[H]
	\newcommand{\tabincell}[2]{\begin{tabular}{@{}#1@{}}#2\end{tabular}}
	\centering
	\caption{Several values of the 0.1-Nash equilibrium}
	\begin{tabular}{c cccccc}
		\hline\hline\noalign{\smallskip}
		$\bm{\lambda}=(\lambda^1,\lambda^2)$ & $\lambda^1=1$ &$\lambda^1=2$  &~&$\lambda^2=1$ &$\lambda^2=2$&$\lambda^2=3$  \\
		\hline
		$\phi_0^1(a_{11}\mid 1, \bm{\lambda})$ 	& $8.46e-4$&$0.0010$&$\phi_0^2(b_{11}\mid 1, \bm{\lambda})$ &$5.04e-4$&$7.59e-4$&$5.57e-4$
		\\
		\hline
		$\phi_4^1(a_{11}\mid 1, \bm{\lambda})$ & 0.0010&0.0011&$\phi_8^2(b_{11}\mid 1, \bm{\lambda})$ &0.0011&$3.26e-4$&$9.55e-4$
		\\
		\hline
		$\phi_9^1(a_{11}\mid 1, \bm{\lambda})$ & $7.65e-4$&0.0012&$\phi_9^2(b_{11}\mid 1, \bm{\lambda})$ &$2.83e-4$&$7.30e-4$&0.0010
		\\
		\noalign{\smallskip}\hline
	\end{tabular}
\end{table}}
\section*{Declaration of competing interest}

None.

\bibliographystyle{model1-num-names}

\end{document}